\theoremstyle{definition}
\newtheorem{definizione}{Definition}[section]
\theoremstyle{plain}
\newtheorem{teorema}[definizione]{Theorem}
\newtheorem*{teorema*}{Theorem}
\theoremstyle{plain}
\newtheorem{lemma}[definizione]{Lemma}
\newtheorem*{property*}{Property 1}
\theoremstyle{plain}
\newtheorem{corollario}[definizione]{Corollary}
\theoremstyle{plain}
\theoremstyle{definition}
\theoremstyle{plain}
\theoremstyle{plain}
\newcommand{\ZZ}{{\mathbb Z}}
\newenvironment{salign*}{\begin{equation*}\begin{aligned}}{
	\end{aligned}\end{equation*}\ignorespacesafterend}
\title{Cohomology of generalised configuration spaces of points on $\mathbb{R}^r$}	\author{Marcel B\"{o}kstedt, Erica Minuz\footnote{The authors are based in Aarhus University, Science and Thechnology, Department of Mathematics. The work is supported by \textit{Det frie forskningsr\aa{}d, natur og univers} $6108-00539A$.}}
\date\today
\newcommand\Conf{\mathrm{Conf}}
\begin{document}
	\maketitle
	\textbf{Abstract.} We compute the cohomology ring of a generalised type of configuration space of points in $\mathbb{R}^r$. This  configuration space is indexed by a graph. In the case the graph is complete the result is known and it is due to Arnold and Cohen. However, our computations give a generalisation to any graph and an alternative proof of the classical result. Moreover, we show that there are deletion-contraction short exact sequences for this cohomology rings. 
\tableofcontents

	\section{Introduction}
The
	configuration space of $n$ points in $\mathbb{R}^r$, that we denote as
	$\Conf_r(n)$, is defined as 	\begin{equation*}
	\Conf_r(n)=\{(x_1,\dots,x_n)\in\mathbb{R}^{rn};x_i\neq x_j\text{ for }i\neq j, 0<i,j\leq n\}.
	\end{equation*} Its cohomology ring has been computed 
        by  Arnold
	\cite{Arnol'd1969} in the case $r=2$ and Cohen
	\cite{COHEN199519} for $r\geq 3$ and it is given by the following quotient of graded rings
	\begin{equation*}
	H^\ast(\Conf_r(n))=\mathbb{Z}[e_{\alpha_{i,j}}]/{\sim}
	\end{equation*}
	where $0<i,j\leq n$, $\mathbb{Z}[e_{\alpha_{i}}]$ is the free
	commutative graded algebra generated by $e_{\alpha_{i,j}}$ of
	degree $r-1$ and $\sim$ are the relations
	\begin{itemize}
		\item $e_{\alpha_{i,j}}=(-1)^{r}e_{\alpha_{j,i}}$
		\item $e_{\alpha_{i,j}}^{2}=0$ if $r$ is odd
		\item
		$e_{\alpha_{a,b}}e_{\alpha_{b,c}}+e_{\alpha_{b,c}}e_{\alpha_{c,a}}+e_{\alpha_{c,a}}e_{\alpha_{a,b}}=0$
	\end{itemize}

In this paper we will study a  generalisation of the definition of
the configuration spaces $\Conf_r(n)$ to configuration spaces depending on a graph. These were 
	defined by \emph{Eastwood} and \emph{Huggett} \cite{E-H}, and
	also described in \cite{B-S}. Given a graph $\Gamma$, we denote the configuration space of points in $\mathbb{R}^r$ depending on a graph by $\Conf_r(\Gamma)$. Let $\alpha_{i,j}$ denote an edge in $\Gamma$ between the vertices $v_i$ and $v_j$, the generalised configuration space of points in $\mathbb{R}^r$ is is defined as 
	\begin{equation*}
	\Conf_r(\Gamma)=\{(x_1,\dots,x_n)\in\mathbb{R}^{rn};x_i\neq x_j\text{ if }\alpha_{i,j}\text{ is an edge in }\Gamma\}.
	\end{equation*}
%	A point in this configuration space is a map of the set of vertices of $\Gamma$ to $\RR^r$, such that if two vertices are adjacent in the graph, their images in $\RR^r$  must be distinct.

The main result in the article is provided by the computation of the cohomology of $\Conf_r(\Gamma)$ for any graph $\Gamma$. The cohomology of $\Conf_r(\Gamma)$ is given by a commutative graded ring that depends on the parity of the integer $r$. Let 
	$\mathbb{Z}[e_{\alpha_{i}}]$ be  the free commutative graded
	algebra generated by $e_{\alpha_{i,j}}$ of degree $r-1$,
	where $\alpha_{i,j}$ is an edge in $\Gamma$ between the
	vertices $i$ and $j$ oriented from $i$ to $j$. Let $w$ be a
	circuit in $\Gamma$, that is  graph consisting  of an ordered sets  of edges $w_1,w_2,\dots ,w_k$ and vertices $v_1(w),\dots ,v_k(w),v_{k+1}(w)=v_1(w)$ such that $v_i(w),v_{i+1}(w)$ are the two  vertices incident to $w_i$. We denote by
	$e_{w}=e_{v_{1,2}}\cdot e_{v_{2,3}}\ldots\cdot e_{v_{s,1}}$ the product of the
	generators corresponding to the edges in the cycle $w$. We prove that the cohomology ring is given by the following quotient of graded rings
	\begin{equation*}
H^\ast(\Conf_r(\Gamma))=\mathbb{Z}[e_{\alpha_{i,j}}]/{\sim}
	\end{equation*}
	where $\sim$ are the relations
	\begin{itemize}
		\item $e_{\alpha_{i,j}}=(-1)^{r}e_{\alpha_{j,i}}$
		\item $e_{\alpha_{i,j}}^{2}=0$ if $r$ is odd
		\item
		$A(w)=\sum_{i}(-1)^{(r-1)i}e_{v_{1,2}}\cdots\hat{e}_{v_{i,j}}\cdots
		e_{v_{s_j,1}}=0$ for every circuit $w$ in $\Gamma$.
	\end{itemize}
	
	\noindent We call the relations $A(w)$ \emph{generalised Arnold relations}. Moreover, let $\Gamma\smallsetminus a$ denote the graph obtained from $\Gamma$ by deleting the edge $\alpha$,  and $\Gamma/\alpha$ the graph obtained by contracting the edge $\alpha$. There is a deletion-contraction short exact sequence in cohomology
	\begin{equation*}
	0\rightarrow
	H^{\ast}(\Conf_r(\Gamma\smallsetminus e))\rightarrow
	H^{\ast}(\Conf_r(\Gamma)) \rightarrow
	H^{\ast-r+1}(\Conf_{r}(\Gamma/e))\rightarrow 0.
	\end{equation*}

        Our approach is as follows. In section~\ref{sec:Alg} we define a graded commutative ring depending on a graph $\Gamma$. The definition is purely algebraic. There are two cases, depending on whether the generators are in even or odd degrees. We establish a deletion-contraction exact sequence for these rings. Such long exact deletion-contraction sequences are well known in graph cohomology, but our main algebraic result is that these long exact sequences actually break up into short exact sequences. 

        In section~\ref{sec:Space} we show that the cohomology rings of the graph indexed configuration space of points in an open discs are given by the rings defined in the previous section. The method is to first establish deletion-contraction long exact sequences of cohomology of generalized configuration spaces, then  to show that the algebraic exact sequences of the previous paragraph maps to these short exact sequences by surjections, and conclude that these surjections are actually isomorphisms by induction over the number of edges in the graphs. This method also gives an alternative approach to the computation of the cohomology of  not generalized configuration spaces.
        
 The results here presented are part of the second author's Ph.D. thesis \textit{Graph complexes and cohomology of configuration spaces}, supervised by the first author.

	\section{Algebraic description of $R^{r}(\Gamma)$}
        \label{sec:Alg}
        In this section we  will describe a graded commutative ring
        $R^r(\Gamma)$. If $r$ is even, $\Gamma$ can be any not oriented graph.   If $r$ is odd, we demand that each edge of $\Gamma$ come with an orientation, that is an ordering of its two adjacent vertices. In both cases we will assume that $\Gamma$ does not have loops, but we do allow multiple edges.
        
        We introduce some notation. A circuit in a graph consists of ordered sets of edges $w_1,w_2,\dots ,w_k$ and vertices $v_1(w),\dots ,v_k(w),v_{k+1}(w)=v_1(w)$ such that $v_i(w),v_{i+1}(w)$ are the two  vertices incident to $w_i$.     
        In case $r$ is odd, the circuit $w$ comes with an additional signs  $\epsilon_i(w)$.  The given orientation of the edge $w_i$ determines an order of the pair of vertices $v_i(w),v_{i+1}(w)$, If in this order $v_i<v_{i+1}$, then $\epsilon_i(w)=1$, else $\epsilon_i(w)=-1$.

      For a circuit $w$ of length $l(w)$, we denote by
$e_{w}=w_1\cdot w_2\ldots\cdot w_{l(w)}$  the product of the generators
corresponding to the edges in the circuit $w$.  If one changes the
order of $w$ by a cyclic permutation to obtain a new circuit $w'$, one
changes $e_w$  by at most by a sign: $e_w = \pm e_{w'}$.       
        
For $r$ and $\Gamma$ we will define a graded commutative algebra
$\Lambda^r(\Gamma)$, and an ideal $I^r(\Gamma)$ in this algebra. The
precise definitions depend on whether $r$ is even or odd.
    
\begin{definizione}\label{R_r}
  If $r$ is even, the algebra $\Lambda^r(\Gamma)$ is the free graded
  commutative algebra over the integers with one generator $e_\alpha$
  in degree $r-1$ for each edge $\alpha\in E(\Gamma)$.  If $r$ is odd,
  $\Lambda^r(\Gamma)$ is the quotient of the graded commutative
  algebra over the integers with one generator $e_\alpha$ for each
  edge $\alpha\in E(\Gamma)$ by the relations $e_\alpha^2=0$. In this
  case, $\Lambda^r[\Gamma]$ is actually commutative.  For each circuit
  we define its Arnold class:
  \[
    A(w)=
    \begin{cases}
      A(w) = \sum_{i}(-1)^{i}w_1\cdots\hat{w_i}\cdots
      w_{l(w)}&\text{ if $r$ is even,}\\
      A(w) = \sum_{i}\epsilon_i(w)w_1\cdots\hat{w_i}\cdots
      w_{l(w)}&\text{ if $r$ is odd.}
    \end{cases}
  \]
  In case $w$ consists of a single edge, which then has to be loop,
  this will be interpreted as $A(w)=1$.  Let the generalized Arnold
  ideal $I^r(\Gamma)$ be the ideal of $\Lambda^r(\Gamma)$ generated by
  the Arnold classes. Finally, let $R^r(\Gamma)$ be the quotient ring
  $\Lambda^r(\Gamma)/I^r(\Gamma)$.
\end{definizione}
              
  A map of graphs $f:\Gamma\to \Gamma'$ induces a map $f^R:\Lambda^r(\Gamma) \to \Lambda^r(\Gamma)$ which preserves the Arnold classes, so it also induces a map of rings $f^R:R^r(\Gamma)\to R^r(\Gamma')$.
	
  We can usually assume that $\Gamma$ has no multiple edges, since the
  following lemma holds.

  \begin{lemma}\label{le:double_edges}
    Let $\Gamma$ be a graph and $e$ an edge of $\Gamma$ such that
    there exists a different edge $e'$ incident to the same vertices
    as $e$. Let $\Gamma'=\Gamma\smallsetminus e$. Then, the rings $R^{r}(\Gamma)$
    and $R^{r}(\Gamma')$ are isomorphic.
  \end{lemma}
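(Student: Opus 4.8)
The plan is to realise the isomorphism by a pair of opposite graph maps and to reduce everything to the relation coming from the length-two circuit $\{e,e'\}$. Write $v_1,v_2$ for the common endpoints of $e$ and $e'$. On one side there is the inclusion $\iota\colon\Gamma'\hookrightarrow\Gamma$, a graph map, which induces $\iota^R\colon R^r(\Gamma')\to R^r(\Gamma)$ fixing each generator $e_\alpha$ with $\alpha\in E(\Gamma')$. On the other side consider the ``collapse'' graph map $c\colon\Gamma\to\Gamma'$ that is the identity on vertices and on every edge other than $e$, and sends $e$ to $e'$; this is a legitimate graph map precisely because $e$ and $e'$ are both incident to $\{v_1,v_2\}$. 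It induces an algebra map $\psi\colon\Lambda^r(\Gamma)\to\Lambda^r(\Gamma')$ which fixes all generators except $e_e$, sent to $\pm e_{e'}$ with the sign dictated by the Arnold class of the digon $\{e,e'\}$ (in the even case simply $e_e\mapsto e_{e'}$). Note $\psi$ respects the algebra relations, as $\psi(e_e)^2=e_{e'}^2=0$. I will show $\psi$ descends to $c^R\colon R^r(\Gamma)\to R^r(\Gamma')$ and that $c^R$ and $\iota^R$ are mutually inverse.

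Granting that $c^R$ is well defined, the two composites are immediate. The composite $c^R\circ\iota^R$ fixes every generator $e_\alpha$ with $\alpha\in E(\Gamma')$ (in particular $e'$), so it is the identity of $R^r(\Gamma')$. For $\iota^R\circ c^R$ on $R^r(\Gamma)$ the only generator not visibly fixed is $e_e$, sent to $\pm e_{e'}$. But the circuit $w_0$ with edge sequence $(e,e')$ is a circuit of $\Gamma$, and its Arnold class, equal to $e_e-e_{e'}$ in the even case and to $\epsilon_1(w_0)e_{e'}+\epsilon_2(w_0)e_e$ in the odd case, lies in $I^r(\Gamma)$; hence $e_e=\pm e_{e'}$ already holds in $R^r(\Gamma)$ with exactly the sign used to define $\psi$, so $\iota^R\circ c^R$ is the identity. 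Thus $\iota^R$ and $c^R$ are inverse isomorphisms.

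The substance of the proof is therefore to verify that $\psi$ carries the Arnold ideal into $I^r(\Gamma')$, i.e.\ $\psi(A(w))\in I^r(\Gamma')$ for every circuit $w$ of $\Gamma$. If $w$ avoids $e$, then $\psi(A(w))=A(w)$ is already an Arnold class of $\Gamma'$. If $w$ uses $e$ but not $e'$, replacing $e$ by $e'$ throughout turns $w$ into an honest circuit $w''$ of $\Gamma'$ with the same length, positions and signs, whence $\psi(A(w))=A(w'')\in I^r(\Gamma')$. The delicate case, which I expect to be the main obstacle, is when $w$ traverses both $e$ and $e'$, say in positions $p<q$. Here I will use that $e_\alpha^2=0$ in $\Lambda^r(\Gamma)$ in both parities (imposed when $r$ is odd, automatic for the odd-degree generators when $r$ is even): every term of $A(w)$ omitting an edge other than $e,e'$ retains both $e_e$ and $e_{e'}$, hence after applying $\psi$ acquires the factor $e_{e'}^2=0$ and vanishes. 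Only the terms omitting $e$ and omitting $e'$ survive, and both equal, up to sign, the single monomial obtained by deleting both $e$ and $e'$ and inserting one factor $e_{e'}$. A short reordering computation in the graded-commutative algebra shows that sliding this $e_{e'}$ across the $q-p-1$ intervening generators contributes the sign $(-1)^{(r-1)(q-p-1)}$, which combines with the ambient signs $(-1)^p$ and $(-1)^q$ (respectively $\epsilon_p(w),\epsilon_q(w)$ in the odd case) to make the two surviving terms cancel, so $\psi(A(w))=0\in I^r(\Gamma')$. Carrying out this sign bookkeeping carefully, in particular tracking the orientation signs $\epsilon_i(w)$ in the odd case, is the one genuinely computational step; once it is done, $c^R$ is well defined and the proof is complete.
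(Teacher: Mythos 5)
Your strategy is the paper's: the inclusion $\iota$ and the collapse graph map $c$ with $c(e)=e'$, the digon Arnold relation forcing $e_e=\pm e_{e'}$ in $R^r(\Gamma)$, and a reduction to showing $\psi(I^r(\Gamma))\subset I^r(\Gamma')$. But your key computational claim in the case where $w$ traverses both $e$ and $e'$ — that the two surviving terms always cancel, so $\psi(A(w))=0$ — is false for $r$ odd. The cancellation occurs only when $w$ traverses $e$ and $e'$ in \emph{opposite} directions through the pair $\{v_1,v_2\}$; if it traverses them in the same direction, the terms reinforce. Concretely, take $r$ odd, orient $e$ and $e'$ both from $v_1$ to $v_2$ (so the digon gives $e_e=e_{e'}$ and your $\psi$ sends $e_e\mapsto +e_{e'}$), let $a,b$ be edges oriented from $v_2$ to $v_1$, and let $w=(e,a,e',b)$, a circuit with $\epsilon_i(w)=+1$ for all $i$. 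Then $A(w)=ae'b+ee'b+eab+eae'$, and since the algebra is commutative with $e'^2=0$,
\[
\psi(A(w)) \;=\; ae'b \;+\; 0 \;+\; abe' \;+\; 0 \;=\; 2abe' \;\neq\; 0 \in \Lambda^r(\Gamma').
\]
Your sliding sign $(-1)^{(r-1)(q-p-1)}$ equals $+1$ for $r$ odd, and here $\epsilon_p(w)=\epsilon_q(w)=+1$, so nothing cancels. (For $r$ even your Koszul-sign computation is correct and the cancellation does go through in all configurations.)

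The gap is repairable, and the repair makes your third case unnecessary: the paper's definition of circuit does not require the edges $w_1,\dots,w_{l(w)}$ to be distinct, so replacing every occurrence of $e$ by $e'$ in $w$ always yields a circuit $w''$ of $\Gamma'$ — possibly traversing $e'$ twice — and the sign bookkeeping of your second case gives $\psi(A(w))=\pm A(w'')\in I^r(\Gamma')$ by definition of the Arnold ideal. Indeed, in the example above $A(w'')=2abe'$ exactly. What you need is ideal membership, not vanishing, and insisting on vanishing is precisely what breaks. With that one change your argument closes and coincides with the paper's proof, which disposes of the entire verification in one line by observing that $c$ is a map of graphs, that graph maps carry circuits to circuits and hence Arnold classes to Arnold classes, and that the digon relation makes $\iota^R$ and $c^R$ mutually inverse; your write-up is in effect an expanded (and, in one case, miscalculated) check of that functoriality assertion.
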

  \begin{proof}
    There is an inclusion of graphs $i:\Gamma'\to \Gamma$, and a left
    inverse $p$ to $i$, such that $p(e)=e'$.  In $R^{r}(\Gamma)$ we
    have the generalised Arnold relation $e'-e=0$, so that
    $e'=e\in R^r(\Gamma)$. It follows easily that the induced maps
    $i_*:R^r(\Gamma')\to R^r(\Gamma)$ and
    $p_*:R^r(\Gamma)\to R^r(\Gamma')$ are inverse isomorphisms.
  \end{proof}
	
	\subsection{Deletion-contraction short exact sequence for
		$R^{r}(\Gamma)$}
	Let $\Gamma$ be a graph and
	$\alpha\in E(\Gamma)$. We will mainly be interested in graphs without loops and multiple edges, but it is convenient not to exclude these cases, in order to be able to formulate certain induction arguments in a smooth way.
        We can delete the edge $\alpha$ from the graph $\Gamma$ to obtain the graph $i_\alpha:\Gamma\smallsetminus \alpha\subset \Gamma$. We can also contract the edge $\alpha$ to obtain a graph $\Gamma/\alpha$. The graph $\Gamma/\alpha$ might have multiple edges, but it does  not have loops. There is a map $p_\alpha:\Gamma\to \Gamma/\alpha$ which identifies the two vertices incident to $\alpha$.
There are induced maps of graded algebras $i_\alpha^\Lambda:\Lambda^r[\Gamma\smallsetminus \alpha]\to \Lambda^r(\Gamma)$ and $p_\alpha^\Lambda:\Lambda^r[\Gamma]\to \Lambda^r[\Gamma/\alpha]$.
        
        If $\beta\in E(\Gamma\smallsetminus \alpha)$ we alternatively denote its image $i_\alpha(\beta)\in E(\Gamma)$ by $\beta$. If $\gamma\in E(\Gamma)$, we alternatively denote its image in $p_\alpha(\gamma)\in E(\Gamma/\alpha)$ by $[\gamma]$.

        Let $\Lambda[e_{\alpha}]$ be the
	exterior algebra on the generator $e_{\alpha}$ of degree $r-1$.

        \begin{definizione}
          We consider the following two ring homomorphisms:
        \begin{align*}
          \iota_\alpha:\Lambda^r[\Gamma/\alpha] &\to \Lambda^r(\Gamma/\alpha)\otimes \Lambda^r[e_\alpha] &\quad \psi_\alpha:\Lambda^r(\Gamma)&\to \Lambda^r(\Gamma/\alpha)\otimes \Lambda^r[e_\alpha]\\
          \iota_\alpha(e_{[\eta]})&=e_{[\eta]}\otimes 1&
                                                          \psi_\alpha(e_{[e_\eta]})&=
        \begin{cases}
          e_{[\eta]}\otimes 1&\text{ if $\eta\neq \alpha$,}\\
           1\otimes e_{[\alpha]}&\text{ if $\eta= \alpha$.}
        \end{cases}
        \end{align*}
        \end{definizione}

        Consider the generalized Arnold classes in $\Lambda^r(\Gamma/\alpha)\otimes \Lambda^r[e_\alpha]$. 
We define the Arnold ideal in $\Lambda^r(\Gamma/\alpha)\otimes \Lambda^r[e_\alpha]$ to be the ideal generated by the generalized Arnold classes in $\Lambda^r(\Gamma/\alpha)\subset \Lambda^r(\Gamma/\alpha)\otimes \Lambda^r[e_\alpha]$. If $\Gamma'$ is the graph obtained from $\Gamma/\alpha$ by adding a single vertex and a single edge connecting the new vertex to $\alpha$, there is an obvious isomorphism preserving Arnold ideals between this ring and $\Lambda(\Gamma')$.
	\begin{lemma}
          \label{le:exterior.square}
		The following diagram of ring maps is commutative and the maps preserve the generalized Arnold ideals.
	\begin{equation*}
	\begin{tikzcd}
	\Lambda^r(\Gamma\smallsetminus\alpha)\ar[r,"p_\alpha^\Lambda\circ i_\alpha^\Lambda"]\ar[d,"i_\alpha^\Lambda"] & \Lambda^r(\Gamma/\alpha)\ar[d,"\iota_\alpha^\Lambda"]\\
	\Lambda^r(\Gamma)\ar[r,"\psi_\alpha^\Lambda"] & \Lambda^r(\Gamma/\alpha)
	\otimes \Lambda^r[e_\alpha]
	\end{tikzcd}
      \end{equation*}
              \end{lemma}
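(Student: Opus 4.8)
The plan is to prove the two assertions—commutativity of the square and preservation of the generalized Arnold ideals—separately, reducing each to a computation on the edge generators $e_\beta$ and exploiting that all four maps are ring homomorphisms. For commutativity it suffices to compare the two composites on a generator $e_\beta$ with $\beta\in E(\Gamma\smallsetminus\alpha)$, since both composites are ring maps. The right-then-down route gives $\iota_\alpha^\Lambda(p_\alpha^\Lambda i_\alpha^\Lambda(e_\beta))=\iota_\alpha^\Lambda(e_{[\beta]})=e_{[\beta]}\otimes 1$, while the down-then-right route gives $\psi_\alpha^\Lambda(i_\alpha^\Lambda(e_\beta))=\psi_\alpha^\Lambda(e_\beta)=e_{[\beta]}\otimes 1$, the last equality holding because $\beta\neq\alpha$. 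As the two agree on every generator, they agree everywhere.

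For ideal preservation, three of the four maps are immediate. The maps $i_\alpha^\Lambda$ and $p_\alpha^\Lambda$ are induced by the graph maps $i_\alpha$ and $p_\alpha$, so by the remark preceding Lemma~\ref{le:double_edges} they carry Arnold classes into Arnold ideals; hence so does their composite, the top map. The map $\iota_\alpha^\Lambda$ sends an Arnold class $A(c)$ of a circuit $c$ in $\Gamma/\alpha$ to $A(c)\otimes 1$, which is by definition one of the chosen generators of the Arnold ideal of $\Lambda^r(\Gamma/\alpha)\otimes\Lambda^r[e_\alpha]$. Since $\Lambda^r[e_\alpha]$ is free of rank two on $1$ and $e_\alpha$, that target ideal is exactly $\{a\otimes 1 + a'\otimes e_\alpha : a,a'\in I^r(\Gamma/\alpha)\}$, a description I will use below.

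The only substantial point is $\psi_\alpha^\Lambda$. The key auxiliary fact is that for any circuit $c=(c_1,\dots,c_k)$ the full monomial $e_c=c_1\cdots c_k$ already lies in the Arnold ideal: multiplying $A(c)$ by $c_1$ annihilates every term except $i=1$, since all others contain $c_1$ twice and vanish as $c_1^2=0$ in both parities, giving $c_1 A(c)=\pm e_c$ and hence $e_c=\pm c_1 A(c)\in I^r$. Now I fix a circuit $w$ of $\Gamma$ and split into cases. If $\alpha\notin w$, then $\psi_\alpha^\Lambda$ acts factorwise as $e_{w_j}\mapsto e_{[w_j]}\otimes 1$, so $\psi_\alpha^\Lambda(A(w))=A([w])\otimes 1$ for the image circuit $[w]$ in $\Gamma/\alpha$, a generator of the target ideal. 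If $\alpha=w_m$, I separate the term $i=m$ of $A(w)$, which omits $\alpha$ and so lands in the $\otimes 1$ summand, from the terms $i\neq m$, which all retain $\alpha$ and so acquire the factor $1\otimes e_\alpha$. The former contributes $\pm e_{[w']}\otimes 1$, where $w'=(w_1,\dots,\widehat{w_m},\dots,w_{l(w)})$ is the contracted circuit in $\Gamma/\alpha$, while the latter assemble, after moving $e_\alpha$ to the right, into $\pm A([w'])\otimes e_\alpha$. By the auxiliary fact $e_{[w']}\in I^r(\Gamma/\alpha)$, and $A([w'])\in I^r(\Gamma/\alpha)$ tautologically, so $\psi_\alpha^\Lambda(A(w))$ lies in $I^r(\Gamma/\alpha)\otimes\Lambda^r[e_\alpha]$, the Arnold ideal of the target.

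I expect the main obstacle to be bookkeeping rather than anything conceptual. First, in the even-$r$ case one must check that the signs produced by commuting $1\otimes e_\alpha$ past the odd-degree generators $e_{[w_j]}\otimes 1$ recombine with the $(-1)^i$ of $A(w)$ to reproduce exactly the signs of $A([w'])$ up to a single global sign, and likewise that the orientation signs $\epsilon_i$ transport correctly under contraction in the odd-$r$ case. Second, one must dispose of the degenerate circuits—those of length two through $\alpha$, for which $w'$ would collapse to a single edge—which are handled by the loop convention $A(\mathrm{loop})=1$ together with Lemma~\ref{le:double_edges}. Neither difficulty is deep, but the sign verification in the second case is where the genuine care lies.
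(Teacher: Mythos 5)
Your overall route is the same as the paper's: commutativity is checked on the ring generators $e_\beta$, ideal preservation for $i_\alpha^\Lambda$, $p_\alpha^\Lambda\circ i_\alpha^\Lambda$ and $\iota_\alpha^\Lambda$ is immediate, and the substance is showing that $\psi_\alpha^\Lambda(A(w))$ lies in the Arnold ideal of $\Lambda^r(\Gamma/\alpha)\otimes\Lambda^r[e_\alpha]$, split according to whether $\alpha$ lies on $w$. On the case $\alpha=w_m$ your computation is actually \emph{more} precise than the paper's: the paper asserts the equation $\psi_\alpha^\Lambda(A(w))=A(u)$ for the contracted circuit $u$, which is not literally true --- the image is $\pm\, e_u\otimes 1\ \pm\ A(u)\otimes e_\alpha$ --- and your auxiliary fact $c_1A(c)=\pm e_c$, hence $e_c\in I^r$, is exactly what is needed to see that the extra summand $e_u\otimes 1$ is harmless. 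Your sign claims do check out: in the even case, transporting $1\otimes e_\alpha$ to the right costs $(-1)^{l(w)-m}$ or $(-1)^{l(w)-m-1}$ according as the omitted edge precedes or follows $\alpha$, and this discrepancy exactly compensates the index shift between $(-1)^{j}$ and $(-1)^{j-1}$ in $A(u)$, giving a single global sign; in the odd case the $\epsilon$'s transport identically since contraction preserves orientations and traversal directions. Your handling of the degenerate length-two circuits through $\alpha$ via the convention $A(\mathrm{loop})=1$ is also consistent.

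There is, however, one genuine gap: your dichotomy ``$\alpha\notin w$'' versus ``$\alpha=w_m$'' silently assumes $\alpha$ occurs at most once on $w$. Circuits in this paper are ordered edge sequences with no injectivity requirement, and since the lemma is stated for graphs that may have multiple edges, circuits traversing $\alpha$ twice do occur (e.g.\ $w=(\alpha,\beta,\alpha,\gamma)$ with $\beta,\gamma$ parallel to $\alpha$); for such $w$ the term $i=m$ of $A(w)$ does \emph{not} omit $\alpha$, so your decomposition into $\pm e_{[w']}\otimes 1$ plus $\pm A([w'])\otimes e_\alpha$ breaks down. The paper disposes of this with the one-line claim $A(w)=0$, which is itself only safe in the even case: in the odd case, if both traversals of $\alpha$ run in the same direction, the two surviving terms add rather than cancel (in the example above $A(w)=2\epsilon_1(w)\,\alpha\beta\gamma\neq 0$), so the conclusion needs an argument either way. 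Your own auxiliary fact closes the case cleanly: if $\alpha$ occurs three or more times, every term of the image contains $(1\otimes e_\alpha)^2=0$, so $\psi_\alpha^\Lambda(A(w))=0$; if $\alpha=w_m=w_{m'}$ with $m\neq m'$, the only surviving terms are $j=m,m'$, each of the form $\pm\, e_{u'}e_{u''}\otimes e_\alpha$, where $u'$ and $u''$ are the two closed loops at the contracted vertex into which the two occurrences of $\alpha$ cut $w$; at least one of them is a nonempty circuit of $\Gamma/\alpha$ (except when $w=(\alpha,\alpha)$, where $A(w)=0$ in both parities), so $e_{u'}e_{u''}\in I^r(\Gamma/\alpha)$ by your fact and the image lies in the ideal. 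Add this case, and carry out the sign verification you flagged (it works, as indicated above); with that, your argument is complete and, on the single-occurrence case, sharper than the paper's.
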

              \begin{proof}
                The diagram commutes since if $e_\eta\in E[\Gamma\smallsetminus \alpha]$ is a generator both paths to the lower right square takes it to $[\beta]\otimes 1$.  $i_\alpha^\Lambda$ and $p_\alpha^\Lambda$ takes an Arnold class to an Arnold class, since they are induced by maps of graphs. The map $\iota_\alpha^\Lambda$ obviuously preserves the Arnold elements, so we only need to check that $\psi_\alpha^\Lambda$ does. Let $w$ be a circuit in $\Gamma$. We have to prove that $\psi_\alpha^\Lambda(A(w))$ is contained in the ideal generated by $A(u)\otimes 1$ for $u$ a circuit in $\Gamma/\alpha$. There are three cases. If $w_i \neq \alpha$ for all $i$, then
                $u=p_\alpha^\Lambda(w)$ is a circuit in $\Gamma/\alpha$ such that $p_\alpha^\Lambda(A(w))=A(u)$, and we are done. If $\alpha$ occurs more than once in $w$, then $A(w)=0$, and we are done again. If finally $e_i=\alpha$ for a unique $i$, then
$u=[w_1][w_2]\dots \widehat{[w_i]}\dots [w_{l(w)}]$ is a circuit in $\Gamma/\alpha$ such that $\psi_\alpha^\Lambda(A(w))=A(u)$, and all  our work is done.               
	\end{proof}
        It follows from lemma~\ref{le:exterior.square} that we get an induced commutative diagram of ring maps
		
		\begin{equation*}
		\begin{tikzcd}
		R^r(\Gamma\smallsetminus\alpha)\ar[r,"p_\alpha^R\circ i_\alpha^R"]\ar[d,"i_\alpha^R"] & R^r(\Gamma/\alpha)\ar[d,"\iota_\alpha^R"]\\
		R^r(\Gamma)\ar[r,"\psi_\alpha^\Lambda"] & R^r(\Gamma/\alpha) \otimes
		\Lambda^r[e_\alpha]
		\end{tikzcd}
		\end{equation*}

	\begin{teorema}\label{theorem1}
		The above diagram
		is a pullback diagram. The map
		$
		i_\alpha^R:R^r(\Gamma\smallsetminus\alpha)\rightarrow R^r(\Gamma)
	        $
		is injective, and the map
                $\psi_\alpha^R:R^r(\Gamma)\to R^r(\Gamma/\alpha)\otimes \Lambda^r[e_\alpha]$ is surjective.
              \end{teorema}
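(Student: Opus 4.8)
The plan is to push everything through $\psi_\alpha$, which is an isomorphism already at the level of the free algebras. Contracting $\alpha$ gives a bijection between the edges of $\Gamma\smallsetminus\alpha$ and those of $\Gamma/\alpha$, so $f:=p_\alpha^\Lambda\circ i_\alpha^\Lambda\colon\Lambda^r(\Gamma\smallsetminus\alpha)\to\Lambda^r(\Gamma/\alpha)$ is an isomorphism of free algebras; writing $C:=\Lambda^r(\Gamma/\alpha)$ and using that $e_\alpha^2=0$ (automatic in odd degree when $r$ is even, imposed when $r$ is odd), we have $\Lambda^r(\Gamma)=\Lambda^r(\Gamma\smallsetminus\alpha)\otimes\Lambda^r[e_\alpha]$ and $\psi_\alpha=f\otimes\mathrm{id}$ is an isomorphism onto $C\otimes\Lambda^r[e_\alpha]$. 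Set $\tilde I:=\psi_\alpha(I^r(\Gamma))$. By Lemma~\ref{le:exterior.square}, $\tilde I\subseteq I^r(\Gamma/\alpha)\otimes\Lambda^r[e_\alpha]$, so $\psi_\alpha^R$ is identified with the quotient map
\[
(C\otimes\Lambda^r[e_\alpha])/\tilde I\;\longrightarrow\;(C\otimes\Lambda^r[e_\alpha])/(I^r(\Gamma/\alpha)\otimes\Lambda^r[e_\alpha])=R^r(\Gamma/\alpha)\otimes\Lambda^r[e_\alpha],
\]
which is surjective. Splitting the target as $R^r(\Gamma/\alpha)\oplus R^r(\Gamma/\alpha)\,e_\alpha$ and letting $\partial\colon R^r(\Gamma)\to R^r(\Gamma/\alpha)$ be the $e_\alpha$-coefficient of $\psi_\alpha^R$ (a homomorphism of graded abelian groups lowering degree by $r-1$), the pullback of $\psi_\alpha^R$ against the split inclusion $\iota_\alpha^R$ is precisely $\ker\partial$. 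Hence the whole theorem is equivalent to the exactness of the deletion-contraction sequence
\[
0\to R^r(\Gamma\smallsetminus\alpha)\xrightarrow{\,i_\alpha^R\,}R^r(\Gamma)\xrightarrow{\,\partial\,}R^r(\Gamma/\alpha)\to 0,
\]
and the inclusion $\operatorname{im}i_\alpha^R\subseteq\ker\partial$ is just the commutativity already recorded.

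For surjectivity of $\partial$ and the reverse inclusion $\ker\partial\subseteq\operatorname{im}i_\alpha^R$, I would evaluate $\psi_\alpha$ on the generating Arnold classes. If a circuit $w$ meets $\alpha$ exactly once, a direct computation (the one underlying Lemma~\ref{le:exterior.square}) gives $\psi_\alpha(A(w))=\pm\,e_u\otimes 1\pm A(u)\otimes e_\alpha$, where $u$ is the circuit of $\Gamma/\alpha$ obtained by contracting $\alpha$ and $e_u\in C$ is the product of all edges of $u$. The key elementary fact is that $g_j\cdot A(u)=\pm e_u$ for every edge $g_j$ of $u$, since every cross term contains a repeated generator and vanishes; in particular $e_u\in I^r(\Gamma/\alpha)$, which is exactly what makes the $e_u\otimes 1$ term land in the Arnold ideal. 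Reading off the two components then shows that, for every (simple) circuit $u$ of $\Gamma/\alpha$, the element $A(u)\otimes e_\alpha$ lies in $\tilde I+(C\otimes 1)$: when $u$ lifts to a circuit of $\Gamma\smallsetminus\alpha$ one has $A(u)\in\tilde I\cap C$ directly, and when $u$ comes from a circuit crossing $\alpha$ once one uses the displayed identity together with $e_u\otimes 1\in C\otimes 1$. Since $\tilde I+(C\otimes 1)$ is stable under multiplication by $C$ and $I^r(\Gamma/\alpha)$ is generated by these $A(u)$, every $a\in\ker\partial$, represented by $c_0\otimes 1+c_1\otimes e_\alpha$ with $c_1\in I^r(\Gamma/\alpha)$, satisfies $c_1\otimes e_\alpha\in\tilde I+(C\otimes 1)$, whence $a\in\operatorname{im}i_\alpha^R$; surjectivity of $\partial$ is then clear.

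The essential difficulty is the injectivity of $i_\alpha^R$, equivalently $I^r(\Gamma)\cap\Lambda^r(\Gamma\smallsetminus\alpha)=I^r(\Gamma\smallsetminus\alpha)$ (the intersection taken inside the subalgebra $e_\alpha=0$). Reducing modulo $I^r(\Gamma\smallsetminus\alpha)$ turns this into the assertion that the ideal of $R^r(\Gamma\smallsetminus\alpha)\otimes\Lambda^r[e_\alpha]$ generated by the classes $\zeta_w:=[P(w)]+[R(w)]\otimes e_\alpha$ (for circuits $w$ crossing $\alpha$ once) meets the subring $R^r(\Gamma\smallsetminus\alpha)$ only in $0$; here $P(w)$ is the product of the path $w\smallsetminus\alpha$ and $R(w)$ equals $A(u)$ up to sign. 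The two usable tools are again $P(w)=\pm g\cdot R(w)$ for any edge $g$ of that path and $g\cdot P(w)=0$: multiplying a relation $\zeta_w=0$ by such an edge annihilates the pure part contributed by that single generator, which already forces triviality in small cases such as a single circuit. The obstacle is that distinct circuits use distinct edges, so one cannot clear all generators simultaneously by one multiplication. I expect to resolve this by induction on the number of edges, fixing a linear order with $\alpha$ largest and showing that the no-broken-circuit monomials form an additive basis compatible with $i_\alpha^R$ and $\partial$; under such a basis $i_\alpha^R$ carries basis elements to basis elements, injectivity is immediate, and the middle exactness and the splitting are recovered simultaneously. Constructing this compatible basis is the real content, and it is the step I expect to be hardest.
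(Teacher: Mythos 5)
Your reduction of the theorem to the exactness of $0\to R^r(\Gamma\smallsetminus\alpha)\to R^r(\Gamma)\xrightarrow{\partial} R^r(\Gamma/\alpha)\to 0$ is exactly the paper's strategy (your $\partial$ is the paper's $g^R_\alpha$ under $x\otimes e_\alpha\mapsto x$), and your middle portion is sound: the identity $\psi_\alpha(A(w))=\pm e_u\otimes 1\pm A(u)\otimes e_\alpha$ for circuits crossing $\alpha$ once, together with $g_j\cdot A(u)=\pm e_u$ (so $e_u\otimes 1$ lies in the Arnold ideal), recovers the content of Lemma~\ref{surjective_g} and Corollary~\ref{le:preliminary_exactness} by a direct element chase. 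One small point there: your parenthetical restriction to \emph{simple} circuits silently uses that the Arnold ideal is generated by Arnold classes of circuits passing at most once through the collapsed vertex; this needs the cyclic-decomposition identity $A(w)=\sum_i\pm e_{w^{(1)}}\cdots A(w^{(i)})\cdots e_{w^{(k)}}$, which the paper proves inside Lemma~\ref{surjective_g} and which you should state and verify rather than assume.

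The genuine gap is the one you name yourself: the injectivity of $i_\alpha^R$ is never proved, only deferred to a hoped-for basis of no-broken-circuit monomials, and that deferred step is the actual content of the theorem. The paper spends its two key lemmas on precisely this point, via a simultaneous induction on the number of edges (property $(*)$ in Lemma~\ref{le:induction_step}): injectivity of all deletion maps is proved together with the detection statement that every $x\in R^r(\Gamma)\setminus\ZZ$ satisfies $g^R_\beta(x)\neq 0$ for some edge $\beta$, the two statements being glued by the commutation $g^R_\beta i^R_\alpha=(i^R_\alpha\otimes\mathrm{Id})g^R_\beta$ of Lemma~\ref{le:commutative} --- no basis is ever constructed. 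Your NBC plan could in principle replace this (for odd $r$ the ring $R^r(\Gamma)$ is an Orlik--Solomon algebra of the graphic matroid, for even $r$ its even analogue, and NBC bases are known for both), but as stated it has a concrete defect: with the usual convention (broken circuit $=$ circuit minus its \emph{largest} edge) and $\alpha$ chosen largest, deletion does \emph{not} carry basis monomials to basis monomials. For a triangle with edges $a<b<\alpha$, the monomial $ab$ is NBC in $\Gamma\smallsetminus\alpha$ but is the broken circuit of $\{a,b,\alpha\}$ in $\Gamma$; you want $\alpha$ \emph{smallest} for the compatibility $\mathrm{NBC}(\Gamma)=\mathrm{NBC}(\Gamma\smallsetminus\alpha)\sqcup\alpha\cdot\mathrm{NBC}(\Gamma/\alpha)$. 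Even after fixing the order, proving that NBC monomials are linearly independent in $R^r(\Gamma)$ (spanning is easy straightening) is of the same order of difficulty as the injectivity it is meant to deliver, so until that independence is established the proposal does not constitute a proof.
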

              We will prove this theorem in the nex subsection.
	As consequences we have
	\begin{corollario}
		Suppose that $\Gamma'$ is a subgraph of $\Gamma$ such that
		$V(\Gamma')=V(\Gamma)$. The map induced by inclusion 
		$ i_*:R^r(\Gamma')\rightarrow R^r(\Gamma)$ is injective.
	\end{corollario}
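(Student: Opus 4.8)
The plan is to reduce the general statement to the single-edge case handled by Theorem~\ref{theorem1} and then iterate. Since $\Gamma'$ is a subgraph of $\Gamma$ with $V(\Gamma')=V(\Gamma)$, the difference $E(\Gamma)\smallsetminus E(\Gamma')$ is a finite set of edges, and I would argue by induction on its cardinality $m$.

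For the base case $m=0$ we have $\Gamma'=\Gamma$ and $i_*$ is the identity, hence injective. For the inductive step, I would pick an edge $\alpha\in E(\Gamma)\smallsetminus E(\Gamma')$ and set $\Gamma''=\Gamma\smallsetminus\alpha$. Then $\Gamma'$ is a subgraph of $\Gamma''$ with the same vertex set, and $E(\Gamma'')\smallsetminus E(\Gamma')$ has cardinality $m-1$, so by the inductive hypothesis the inclusion induces an injection $R^r(\Gamma')\to R^r(\Gamma'')$. On the other hand, Theorem~\ref{theorem1} applied to the edge $\alpha$ states precisely that $i_\alpha^R:R^r(\Gamma\smallsetminus\alpha)=R^r(\Gamma'')\to R^r(\Gamma)$ is injective.

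It then remains to observe that the composite of these two maps is the map $i_*$ induced by the inclusion $\Gamma'\subset\Gamma$. This is just functoriality of $R^r(-)$: the inclusion $\Gamma'\subset\Gamma$ factors as $\Gamma'\subset\Gamma''\subset\Gamma$, and since $\Gamma\mapsto R^r(\Gamma)$ is functorial on graph maps (as noted right after Definition~\ref{R_r}), the induced ring maps compose accordingly. A composite of injective maps is injective, which completes the induction and the proof.

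The only point requiring any care is to verify that this factorization of $i_*$ through $R^r(\Gamma'')$ is compatible on the nose with the maps supplied by Theorem~\ref{theorem1}, i.e.\ that both factors are induced by the same edge inclusions and hence agree by functoriality. I expect this to be pure bookkeeping rather than a genuine obstacle, since there is no choice in how the inclusion $\Gamma'\subset\Gamma$ splits through $\Gamma\smallsetminus\alpha$.
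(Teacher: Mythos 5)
Your proof is correct and is precisely the argument the paper intends: the corollary is stated there as an immediate consequence of Theorem~\ref{theorem1}, obtained by deleting the edges of $E(\Gamma)\smallsetminus E(\Gamma')$ one at a time and composing the resulting injections, with functoriality of $R^r(-)$ guaranteeing the composite is the inclusion-induced map. The only implicit hypothesis to note is the paper's standing assumption that graphs have no loops (otherwise the injectivity in Theorem~\ref{theorem1} can fail), which is preserved under edge deletion, so your induction goes through.
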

	\begin{corollario}\label{corollario 1}
		For every $\alpha\in E(\Gamma)$ there is a short exact
		sequence of Abelian groups
		\begin{equation*}
		0\rightarrow R^r(\Gamma\smallsetminus\alpha)_{k}\rightarrow R^r(\Gamma)_{k}\rightarrow R^r(\Gamma/\alpha)_{k-r+1}\rightarrow 0
		\end{equation*}
		where the indices $k$  and $k-r+1$ denote the grading in the ring.
	\end{corollario}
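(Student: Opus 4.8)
The plan is to read the short exact sequence directly off the three assertions of Theorem~\ref{theorem1}, using the grading of the exterior factor $\Lambda^r[e_\alpha]$. Since $\Lambda^r[e_\alpha]$ is the exterior algebra on one generator of degree $r-1$, as a graded abelian group it is $\ZZ\cdot 1\oplus\ZZ\cdot e_\alpha$, concentrated in degrees $0$ and $r-1$. Hence for each $k$ there is a decomposition
\[
\left(R^r(\Gamma/\alpha)\otimes\Lambda^r[e_\alpha]\right)_k=\left(R^r(\Gamma/\alpha)_k\otimes 1\right)\oplus\left(R^r(\Gamma/\alpha)_{k-r+1}\otimes e_\alpha\right).
\]
I would let $\pi:R^r(\Gamma/\alpha)\otimes\Lambda^r[e_\alpha]\to R^r(\Gamma/\alpha)$ be the projection onto the second summand composed with the identification $x\otimes e_\alpha\mapsto x$. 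This $\pi$ is surjective and lowers total degree by $r-1$, and its kernel is exactly $R^r(\Gamma/\alpha)\otimes 1=\operatorname{Im}\iota_\alpha^R$ (here $\iota_\alpha^R$ is the split inclusion $x\mapsto x\otimes 1$, so it is injective with this image).

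Next I would take the right-hand map of the claimed sequence to be the composite $\pi\circ\psi_\alpha^R:R^r(\Gamma)\to R^r(\Gamma/\alpha)$, which carries degree $k$ to degree $k-r+1$, matching the statement. Surjectivity of this composite is immediate, since $\psi_\alpha^R$ is surjective by Theorem~\ref{theorem1} and $\pi$ is surjective; this gives exactness at the right-hand term. Exactness at the left-hand term is precisely the injectivity of $i_\alpha^R$, again supplied by Theorem~\ref{theorem1}.

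The only step carrying genuine content is exactness in the middle, i.e.\ $\operatorname{Im} i_\alpha^R=\ker(\pi\circ\psi_\alpha^R)$. An element $a\in R^r(\Gamma)$ lies in $\ker(\pi\circ\psi_\alpha^R)$ iff $\psi_\alpha^R(a)\in\ker\pi=\operatorname{Im}\iota_\alpha^R$, that is, iff there exists $b\in R^r(\Gamma/\alpha)$ with $\psi_\alpha^R(a)=\iota_\alpha^R(b)$. Because the square of Theorem~\ref{theorem1} is a pullback, the fibre product is identified with $R^r(\Gamma\smallsetminus\alpha)$ in such a way that $i_\alpha^R$ becomes the projection $(a,b)\mapsto a$; its image is therefore exactly the set of such $a$. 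This identifies $\operatorname{Im} i_\alpha^R$ with $\ker(\pi\circ\psi_\alpha^R)$ and finishes the argument. I expect this reading-off of the image of one leg of a pullback square as the preimage of the image of the opposite leg to be the main (indeed the only non-formal) point; the remainder is degree bookkeeping together with the fact that $\iota_\alpha^R$ is the split inclusion onto $R^r(\Gamma/\alpha)\otimes 1$.
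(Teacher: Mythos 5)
Your proof is correct and is essentially the paper's own derivation: your composite $\pi\circ\psi_\alpha^R$ is exactly the map the paper calls $g_\alpha^R$, the splitting of $\bigl(R^r(\Gamma/\alpha)\otimes\Lambda^r[e_\alpha]\bigr)_k$ into the degree-$0$ and degree-$(r-1)$ parts of $\Lambda^r[e_\alpha]$ is the same decomposition the paper uses to define $\pi$, and the resulting sequence is the right-hand column of diagram~\ref{diagram}. Deducing middle exactness from the pullback property of Theorem~\ref{theorem1} (an element $a$ has $\psi_\alpha^R(a)\in\mathrm{im}\,\iota_\alpha^R$ iff $a\in\mathrm{im}\,i_\alpha^R$) is precisely the formal step the paper leaves implicit when it states the corollary ``as a consequence'' of Theorem~\ref{theorem1}, and the paper's own proof of that theorem makes the same identification $\ker(g_\alpha^R)=(\psi_\alpha^R)^{-1}(\mathrm{im}\,\iota_\alpha^R)$.
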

	
	\subsection{Proof of \emph{Theorem }\ref{theorem1}}\label{proof}
        Let $\Lambda^r[\Gamma/\alpha]\otimes e_\alpha$ denote the ideal in
        $\Lambda^r[\Gamma/\alpha]\otimes \Lambda^r[e_\alpha]$ generated by $e_\alpha$.
        As an Abelian group, $\Lambda^r[\Gamma/\alpha]\otimes \Lambda^r[e_\alpha]$ is the direct sum of the image of the injective map $\iota_\alpha^\Lambda$ and
the ideal        $\Lambda^r[\Gamma/\alpha]\otimes e_\alpha$. Let
        $\pi:\Lambda^r[\Gamma/\alpha]\otimes \Lambda[e_\alpha]
        \to \Lambda^r[\Gamma/\alpha]\otimes e_\alpha$ be the projection, and define
        \[
          g_\alpha^\Lambda=\pi\circ \psi_\alpha^\Lambda:\Lambda[\Gamma]\to
          \Lambda^r[\Gamma/\alpha]\otimes e_\alpha.
        \]        
Since $\psi_\alpha^\Lambda$ and $\pi$ preserve the ideal generated by the Arnold classes, so does $g_\alpha^\Lambda$. We obtain a restricted map
$g_\alpha^I :I(\Gamma)\to I(\Gamma/\alpha)\otimes e_\alpha$ and a quotient map
$g_\alpha^R:R^r(\Gamma) \to R^r(\Gamma/\alpha)\otimes e_\alpha$. 
We also immediately obtain a commutative diagram:

\begin{equation}\label{diagram}
	\begin{tikzcd}[column sep=1.5em]
	&0\ar[d]&0\ar[d]&0\ar[d]&\\
	0\ar[r] &I^r(\Gamma\smallsetminus\alpha) \ar[r]\ar[d,"i_\alpha^I"]
	&\Lambda^r[\Gamma\smallsetminus\alpha] \ar[r]\ar[d,"i_\alpha^\Lambda"]
	&R^r(\Gamma\smallsetminus\alpha) \ar[r]\ar[d,"i_\alpha^R"]
 	&0\\
	0\ar[r] &I^r(\Gamma)\ar[r]\ar[d,"g_\alpha^I"] &
	\Lambda^r[\Gamma]\ar[r]\ar[d,"g_\alpha^\Lambda"] &
	R^r(\Gamma)\ar[r]\ar[d,"g_\alpha^R"]
	&0\\
	0\ar[r]& I^r(\Gamma/\alpha)\otimes e_{\alpha}\ar[r]\ar[d] &
	\Lambda^r[\Gamma/\alpha]\otimes e_{\alpha} \ar[r]\ar[d] &
	R^r(\Gamma/\alpha)\otimes e_{\alpha} \ar[r]\ar[d]
	&0\\
	&0&0&0&
	\end{tikzcd}
      \end{equation}
      The rows of this diagram are short exact by definition. Most of this subsection
      will go into proving that the three columns are short exact.
	\begin{lemma}
          \label{le:middle}
	The middle column in diagram~\ref{diagram} is  exact.
	\end{lemma}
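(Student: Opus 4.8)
The plan is to prove all three claims at once by making the underlying $\mathbb{Z}$-module structure of $\Lambda^r(\Gamma)$ explicit and then computing $g_\alpha^\Lambda$ on it. First I would single out the generator $e_\alpha$. Since $e_\alpha^2=0$ in $\Lambda^r(\Gamma)$ — automatically when $r$ is even, because $e_\alpha$ has odd degree $r-1$, and by the imposed relation when $r$ is odd — the subalgebra generated by the remaining edges is exactly $\Lambda^r(\Gamma\smallsetminus\alpha)$, and one obtains an isomorphism of graded algebras
\[
\Lambda^r(\Gamma)\;\cong\;\Lambda^r(\Gamma\smallsetminus\alpha)\otimes\Lambda[e_\alpha],
\]
hence a direct sum decomposition of abelian groups
\[
\Lambda^r(\Gamma)=\Lambda^r(\Gamma\smallsetminus\alpha)\;\oplus\;\Lambda^r(\Gamma\smallsetminus\alpha)\,e_\alpha .
\]
Under this identification $i_\alpha^\Lambda$ is precisely the inclusion of the first summand, $x\mapsto x\otimes 1$, which is visibly injective; this settles exactness on the left.

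Next I would compute $g_\alpha^\Lambda=\pi\circ\psi_\alpha^\Lambda$ on the two summands, using the commutativity already recorded in Lemma~\ref{le:exterior.square}. On an element $x$ of the first summand one has $\psi_\alpha^\Lambda(x)=(p_\alpha^\Lambda\circ i_\alpha^\Lambda)(x)\otimes 1$, which lies in $\Lambda^r(\Gamma/\alpha)\otimes 1$, so $\pi$ kills it and $g_\alpha^\Lambda$ vanishes on the first summand; in particular $g_\alpha^\Lambda\circ i_\alpha^\Lambda=0$. On the second summand, $\psi_\alpha^\Lambda(x\,e_\alpha)=(p_\alpha^\Lambda\circ i_\alpha^\Lambda)(x)\otimes e_\alpha$, which already lies in $\Lambda^r(\Gamma/\alpha)\otimes e_\alpha$, so $g_\alpha^\Lambda(x\,e_\alpha)=(p_\alpha^\Lambda\circ i_\alpha^\Lambda)(x)\otimes e_\alpha$. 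Thus $g_\alpha^\Lambda$ restricted to the second summand is, up to the identification $y\otimes e_\alpha\leftrightarrow y$, just the map $p_\alpha^\Lambda\circ i_\alpha^\Lambda$.

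The only remaining point — and the one place where the combinatorics of contraction enters — is that $p_\alpha^\Lambda\circ i_\alpha^\Lambda\colon\Lambda^r(\Gamma\smallsetminus\alpha)\to\Lambda^r(\Gamma/\alpha)$ is an isomorphism. This holds because deleting and then contracting $\alpha$ merges the two endpoints of $\alpha$ but induces a bijection $\beta\mapsto[\beta]$ between the edges of $\Gamma\smallsetminus\alpha$ and those of $\Gamma/\alpha$: no edge other than $\alpha$ is destroyed, and — using that we may assume $\Gamma$ has no edge parallel to $\alpha$, by Lemma~\ref{le:double_edges} — none becomes a loop, so the induced algebra map is a bijection on generators. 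Granting this, $g_\alpha^\Lambda$ maps the second summand isomorphically onto $\Lambda^r(\Gamma/\alpha)\otimes e_\alpha$, which gives surjectivity. Finally, for $x_0+x_1e_\alpha$ in the kernel we get $(p_\alpha^\Lambda\circ i_\alpha^\Lambda)(x_1)\otimes e_\alpha=0$, whence $x_1=0$ by injectivity, so the kernel is exactly the first summand $\Lambda^r(\Gamma\smallsetminus\alpha)=\operatorname{im} i_\alpha^\Lambda$. This yields exactness in the middle and completes the column.

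I expect the main obstacle to be purely bookkeeping: confirming that $g_\alpha^\Lambda$ is an honest isomorphism on the $e_\alpha$-summand requires unwinding the composite $\pi\circ\psi_\alpha^\Lambda$ together with the edge bijection $\beta\mapsto[\beta]$. I would take care to verify that the two parities ($r$ even giving the exterior algebra, $r$ odd the square-zero commutative algebra) need no separate treatment beyond the uniform fact $e_\alpha^2=0$, and to note that no signs from the Arnold orientations appear at this level, since the argument takes place entirely in $\Lambda$ and not yet in $I$ or $R$.
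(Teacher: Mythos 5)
Your proof follows essentially the same route as the paper's: the paper also writes each element of $\Lambda^r(\Gamma)$ uniquely as $x+ye_\alpha$ with $x,y$ in the image of $i_\alpha^\Lambda$, computes $g_\alpha^\Lambda(x+ye_\alpha)=y\otimes e_\alpha$, and rests everything on the assertion (stated there without proof) that $p_\alpha^\Lambda\circ i_\alpha^\Lambda:\Lambda^r(\Gamma\smallsetminus\alpha)\to\Lambda^r(\Gamma/\alpha)$ is an isomorphism. Your computation of $\psi_\alpha^\Lambda$ on the two summands and your observation that the two parities need no separate treatment are both correct.

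The one step that does not work as stated is your appeal to Lemma~\ref{le:double_edges} to ``assume $\Gamma$ has no edge parallel to $\alpha$.'' That lemma is a statement about the quotient rings $R^r$, not about $\Lambda^r$: the algebras $\Lambda^r(\Gamma)$ and $\Lambda^r(\Gamma\smallsetminus e)$ have different numbers of generators and are \emph{not} isomorphic, and Lemma~\ref{le:middle} concerns the sequence of $\Lambda$'s attached to the \emph{fixed} graph $\Gamma$, so you cannot trade $\Gamma$ for a smaller graph here. Nor is the reduction harmless to omit silently: the paper needs the middle-column exactness for graphs that may have multiple edges (it is invoked, via Corollary~\ref{le:preliminary_exactness} and in Lemma~\ref{lemma surjective}, inside inductions where contractions create double edges), so a proof valid only for graphs without an edge parallel to $\alpha$ would not suffice. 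Fortunately the fact you are after holds without any reduction: under the paper's conventions, contracting $\alpha$ removes only $\alpha$ itself, so $p_\alpha\circ i_\alpha$ induces a bijection $E(\Gamma\smallsetminus\alpha)\to E(\Gamma/\alpha)$ in all cases --- an edge parallel to $\alpha$ simply survives the contraction --- and since $\Lambda^r$ depends only on the edge set (orientations enter only through the signs in the Arnold classes, and loops only through the ideal $I^r$, neither of which is in play at the $\Lambda$ level), the map $p_\alpha^\Lambda\circ i_\alpha^\Lambda$ is a bijection on generators and hence an isomorphism of graded algebras. Replacing the citation of Lemma~\ref{le:double_edges} by this direct observation makes your argument complete and fully general.
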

	\begin{proof}
          The inclusion $i_\alpha^\Lambda $ is clearly injective by its definition.
          
          An element in $\Lambda[\Gamma]$ can be uniquely written as
          $x + ye_\alpha$ where $x,y$ are products of edges different from
          $\alpha$, that is $x,y$ are both in the image of $i^I_\alpha$.
          The image of $i_\alpha^I$ are the classes for which $y=0$. Since
          \[
            g_\alpha^\Lambda(x+ye_\alpha)=\pi(x\otimes 1+y\otimes e_\alpha)=y\otimes e_\alpha ,
           \]
            the kernel of
          $g^\Lambda_\alpha$ also consists of the classes for which $y=0$. This proves exactness at $\Lambda^r(\Gamma)$. Finally, the map
          $p_\alpha^\Lambda\circ i_\alpha^\Lambda:\Lambda^r(\Gamma\smallsetminus \alpha)\to \Lambda^r(\Gamma/\alpha)$ is an isomorphism, so that for any class $x\otimes e_\alpha\in \Lambda(\Gamma/\alpha)\otimes e_\alpha$ we can find $\bar x\in \Lambda^r(\Gamma\smallsetminus \alpha)$ such that $p_\alpha^\Lambda i_\alpha^\Lambda\bar x=x$ and \[
            g_\alpha^\Lambda(i_ \alpha^\Lambda(\bar x)e_\alpha)=
            \pi(\psi_\alpha^\Lambda(i_\alpha^\Lambda(\bar x)e_\alpha))=
            \pi(p_\alpha^\Lambda i_\alpha^\Lambda \bar x\otimes e_\alpha)=
            \pi(x\otimes e_\alpha)=x\otimes e_\alpha.
          \]
          It follows that $g^\Lambda_\alpha$ is surjective.
	\end{proof}

	\begin{lemma}\label{surjective_g}
		The map $g_\alpha^I$  in diagram~\ref{diagram} is surjective.
	\end{lemma}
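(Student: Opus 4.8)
The plan is to produce, for every element of $I^r(\Gamma/\alpha)\otimes e_\alpha$, an explicit preimage in $I^r(\Gamma)$ under $g_\alpha^I$. Throughout, write $a,b$ for the two vertices incident to $\alpha$ and let $*$ be their common image in $\Gamma/\alpha$; the edges of $\Gamma/\alpha$ are in bijection with the edges of $\Gamma$ other than $\alpha$, and for an edge $[\beta]$ of $\Gamma/\alpha$ I write $\widetilde\beta$ for its lift to $\Gamma$.

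First I would reduce to the Arnold generators. The group $I^r(\Gamma/\alpha)\otimes e_\alpha$ is spanned by the elements $m\cdot A(u)\otimes e_\alpha$, where $u$ runs over the circuits of $\Gamma/\alpha$ and $m$ over the monomials of $\Lambda^r(\Gamma/\alpha)$. Given such a monomial, lift it to the monomial $\widetilde m\in i_\alpha^\Lambda(\Lambda^r(\Gamma\smallsetminus\alpha))\subset\Lambda^r(\Gamma)$ built from the same edges. Since $\psi_\alpha^\Lambda$ is a ring map with $\psi_\alpha^\Lambda(\widetilde m)=m\otimes 1$, and since multiplication by $m\otimes1$ preserves the decomposition $\Lambda^r(\Gamma/\alpha)\otimes\Lambda^r[e_\alpha]=(\Lambda^r(\Gamma/\alpha)\otimes1)\oplus(\Lambda^r(\Gamma/\alpha)\otimes e_\alpha)$ used to define $\pi$, one gets $g_\alpha^\Lambda(\widetilde m\,\xi)=(m\otimes1)\,g_\alpha^\Lambda(\xi)$ for every $\xi$. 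Hence, if $\xi_u\in I^r(\Gamma)$ satisfies $g_\alpha^\Lambda(\xi_u)=A(u)\otimes e_\alpha$, then $\widetilde m\,\xi_u\in I^r(\Gamma)$ maps to $m\,A(u)\otimes e_\alpha$. It therefore suffices to realise each single generator $A(u)\otimes e_\alpha$.

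Next I would treat a fixed circuit $u=([\beta_1],\dots,[\beta_m])$ by looking at its lifted edges $\widetilde\beta_1,\dots,\widetilde\beta_m$ in $\Gamma$ and recording at how many passages of $u$ through $*$ the head and tail of the consecutive lifts disagree (a \emph{jump} between $a$ and $b$). If there are no jumps, the lifts already form a circuit $\widehat u$ in $\Gamma\smallsetminus\alpha$ with $p_\alpha(\widehat u)=u$; then $\xi_u=\pm e_\alpha A(\widehat u)\in I^r(\Gamma)$ works, because $\psi_\alpha^\Lambda(e_\alpha A(\widehat u))=(1\otimes e_\alpha)(A(u)\otimes1)=\pm A(u)\otimes e_\alpha$ already lies in the ideal $\Lambda^r(\Gamma/\alpha)\otimes e_\alpha$, so $g_\alpha^\Lambda$ returns it. If there is exactly one jump, I insert $\alpha$ at that passage to obtain a circuit $w$ of $\Gamma$ containing $\alpha$ exactly once and with $p_\alpha(w)=u$; the computation in the third case of the proof of Lemma~\ref{le:exterior.square} shows that the $e_\alpha$-part of $A(w)$ is precisely $A(u)$, i.e. $g_\alpha^\Lambda(A(w))=\pm A(u)\otimes e_\alpha$, so $\xi_u=\pm A(w)$ works.

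The main obstacle is the remaining case, where the lift of $u$ jumps between $a$ and $b$ at two or more passages through $*$: such a $u$ cannot be closed up by a single copy of $\alpha$, and using $\alpha$ twice kills the Arnold class. Here I would argue by induction on the length of $u$. At a vertex $*$ through which $u$ passes several times, one may rewire which outgoing lifted edge follows which incoming one; the different rewirings are circuits on the same edge set, and I expect the generalised Arnold relations to yield a \emph{straightening identity} expressing $A(u)$, modulo Arnold classes of strictly shorter circuits, as $\pm A(u')$ for a rewiring $u'$ with strictly fewer jumps. The shorter Arnold classes are handled by the inductive hypothesis (and, when the contraction has created parallel edges, by the parallel-edge relation of Lemma~\ref{le:double_edges}), while $u'$ eventually falls under the first two cases. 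Establishing this straightening identity for Arnold classes at a repeated vertex is the technical heart of the argument; once it is in place, every generator $A(u)\otimes e_\alpha$ is hit and $g_\alpha^I$ is surjective.
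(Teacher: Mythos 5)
Your first step (reduction to the generators $m\cdot A(u)\otimes e_\alpha$ via the module-map property of $g_\alpha^\Lambda$) and your zero-jump and one-jump cases are correct, and they coincide with the paper's argument; indeed your element $e_\alpha A(\widehat u)$ in the no-jump case is slightly more careful than the paper's own phrasing, since $\pi$ kills $A(u)\otimes 1$ and the extra factor $e_\alpha$ is genuinely needed. The problem is the case of two or more jumps, which you explicitly leave open: you only ``expect'' a straightening identity expressing $A(u)$, modulo Arnold classes of strictly shorter circuits, as $\pm A(u')$ for a rewiring $u'$ with fewer jumps, and you acknowledge that establishing it is the technical heart. That is a genuine gap, not a routine verification, and the identity in the form you propose is also not quite the right target: already for a circuit that decomposes at $*$ into two loops $P_1P_2$ one finds $A(P_1P_2)=A(P_1)e_{P_2}\pm e_{P_1}A(P_2)$, so $A(u)$ is congruent to $0$ modulo monomial multiples of shorter Arnold classes rather than to a single class $\pm A(u')$; chasing a distinguished rewiring is a detour.

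What closes the gap in the paper is an exact Leibniz-type decomposition, with no congruence and no induction on length. If the circuit $u$ passes through the collapsed vertex $*$ several times, cyclically reorder it (which changes $A(u)$ by at most a sign) and write it as a concatenation of sub-circuits $u^{(1)},\dots,u^{(k)}$, each beginning and ending at $*$; then directly from the definition of the Arnold class,
\[
A(u)=\sum_{i}\pm\, e_{u^{(1)}}\cdots e_{u^{(i-1)}}\,A\bigl(u^{(i)}\bigr)\,e_{u^{(i+1)}}\cdots e_{u^{(k)}}.
\]
Each $u^{(i)}$ meets $*$ only at its base point, so its lift to $\Gamma$ has at most one jump between the two endpoints of $\alpha$ and hence falls under your first two cases, while the monomial factors $e_{u^{(j)}}$ are absorbed by exactly the module-map property you established in your first step. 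If you replace your conjectured straightening identity by this decomposition (which is a one-line computation splitting the sum defining $A(u)$ according to which sub-circuit the omitted edge belongs to), your proof becomes complete and agrees with the paper's.
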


	\begin{proof}
              
             The map $g_\alpha^I$ is a map of $\Lambda^r(\Gamma\smallsetminus \alpha)$-modules since 
             \[
              g^I_\alpha((x_1)(x_2+y_2e_\alpha))= 
              g^I_\alpha(x_1x_2 + (x_1y_2)e_\alpha)=x_1y_2\otimes e_\alpha = x_1 g^I_\alpha(x_2 + y_2\alpha)         
              \]
               This means that it is sufficient to prove that each element of a set of generators for $I(\Gamma/\alpha)\otimes e_\alpha$
               as  $\Lambda(\Gamma\smallsetminus \alpha)$-module is in the image of $g_\alpha^I$.
               Note that the map $x \mapsto x\otimes e_\alpha$ defines an isomorphism of $\Lambda(\Gamma\smallsetminus\alpha)$ modules $I(\Gamma/\alpha)\to I(\Gamma/\alpha)\otimes e_\alpha$. It follows that if we define 
               \[
              \bar A(w) = A(w)\otimes e_\alpha
               \]
for circuits $w$ in $\Gamma/ \alpha$, then the classes $\bar A(w)$ form a set of generators for $I(\Gamma/\alpha)\otimes e_\alpha$. We conclude that it suffices to show that for each circuit $w$ in $\Gamma/ \alpha$, the class $\bar A(w)$ is in the image of $g_\alpha^I$.

 Let $v_1,v_2\in V(\Gamma)=V(\Gamma\smallsetminus \alpha)$ be the vertices incident to $\alpha$, and $v \in V(\Gamma/\alpha)$ the vertex given be collapsing $v_1$ and $v_2$.
The vertex $v$ might be incident to some of the edges $[w_i]\in E(\Gamma/\alpha)$. Since we are assuming that $\Gamma$ has no multiple edges or loops, $\Gamma/\alpha$ also has no loops, although it might have double edges. We can decompose the circuit $w$ as a composition of circuits $w^{(i)}$, each starting and ending with the vertex $v$, such that 
\[
w^{(1)}_1,w^{(1)}_2,\cdots w^{(1)}_{l(w^{(1)})},w^{(2)}_1\cdots w^{(2)}_{l(w^{(2)})}\dots w^{(k)}_{l(w^{(k)})}
\]
is a cyclic reordering of $w_1\dots w_{l(w)}$.
A cyclic reordering will at most flip the sign of $A(w)$  so we get that
\[
A(w)=\sum_i \pm e_{w^{(1)}}e_{w^{(2)}}\cdots e_{w^{(i-1)}}A(w^{(i)})e_{w^{(i+1)}}\cdots e_{w^{(k)}}  
\]
This reduces the lemma further to the case when at most two edges of $w$ are incident to $v$.   

Let the circuit be $\{w_1,w_2,\dots w_{l(w)}\}$.
 Since the map
              $p\circ i :E(\Gamma\smallsetminus \alpha) \to E(\Gamma/ \alpha)$ is a bijection, each edge $w_i\in E(\Gamma/\alpha)$ is the image of some
              unique $w_i'\in E(\Gamma\smallsetminus \alpha)$.
If the edges $w'_1,\dots w_{l(w)}'$ form a circuit $w'$ in $\Gamma\smallsetminus \alpha$, then $\bar A(w)=g_\alpha^I(w')$, and we are finished here. 

If the the edges  $w_1',\dots,w_{l(w)}'$ do not form a circuit, this is because
there is an $i$ so that $w_i$ and $w_{i+1}$ are adjacent to $v_1$ and $v_2$ (in either order).
Since $\alpha$ is an edge incident to the vertices $v_1,v_2$, we can form the circuit
 $w''$ to be circuit $w_1',\dots w_i', \alpha,w_{i+1}'\dots w_{l(w)}'$. Then $g_\alpha^I(w'')=\pm \bar A(w)$, and the proof is complete. 
\end{proof}

\begin{corollario}
  \label{le:preliminary_exactness}
  The sequence
  \[
R^r(\Gamma\smallsetminus \alpha)\xrightarrow{i^R_\alpha} R^r(\Gamma) \xrightarrow{g_\alpha^R} R^r(\Gamma/\alpha) \to 0 
\]
is exact.
\begin{proof}
		The map $g_\alpha^R$ is surjective since $g_\alpha^\Lambda$ is surjective.
                That the composite $g_\alpha^Ri_\alpha^R$ is trivial follows from a simple
                diagram chase, using that the composite in
                the middle column is trivial, and that the quotient map
                $\Lambda[\Gamma\smallsetminus \alpha] \to R^r(\Gamma\smallsetminus \alpha)$ is surjective. 
                The only thing left to check is that
                $\mathrm{im}(i_\alpha^R)=\ker(g_\alpha^R)$.

                The columns of the diagram~\ref{diagram} are chain complexes, so that the
                diagram defines a short exact sequence of chain complexes. 
                By lemma~\ref{le:middle}, the homology of the  middle column vanishes.
                Using the long exact sequence of a short exact sequence of chain complexes,
                we see that the quotient 
                $\ker(g_\alpha^R)/\mathrm{im}(i_\alpha^R)$ is isomorphic to the cokernel
                of the map $g_\alpha^I$. According to lemma~\ref{surjective_g}, this cokernel is trivial.
\end{proof}
              \end{corollario}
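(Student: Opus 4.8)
The plan is to read off the asserted exactness directly from the $3\times 3$ grid in diagram~\ref{diagram}, using the two preceding lemmas as the only non-formal input. I would regard the three columns of diagram~\ref{diagram} as a short exact sequence of vertical chain complexes $0\to C^I\to C^\Lambda\to C^R\to 0$, where $C^I$ is the left (Arnold-ideal) column, $C^\Lambda$ the middle column, and $C^R$ the right column. That this is genuinely a short exact sequence of complexes is nothing but the statement that the \emph{rows} of the diagram are short exact, which holds by the very definition of $R^r(-)$ as the quotient $\Lambda^r(-)/I^r(-)$. The quantity I ultimately need to annihilate, namely $\ker(g_\alpha^R)/\mathrm{im}(i_\alpha^R)$, is precisely the homology of $C^R$ at its middle term $R^r(\Gamma)$.

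First I would dispose of the two easy assertions. Surjectivity of $g_\alpha^R$ follows from surjectivity of $g_\alpha^\Lambda$, established in lemma~\ref{le:middle}, together with surjectivity of the vertical quotient maps: given a class in $R^r(\Gamma/\alpha)\otimes e_\alpha$, lift it to $\Lambda^r(\Gamma/\alpha)\otimes e_\alpha$, pull it back through $g_\alpha^\Lambda$, and push the result forward to $R^r(\Gamma)$. That the composite $g_\alpha^R\circ i_\alpha^R$ vanishes is a one-line diagram chase: the corresponding middle composite $g_\alpha^\Lambda\circ i_\alpha^\Lambda$ is zero by exactness of the middle column, and the quotient map onto $R^r(\Gamma\smallsetminus\alpha)$ is surjective, so the composite is forced to factor through zero.

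The substance is exactness at $R^r(\Gamma)$. Here I would invoke the long exact homology sequence attached to $0\to C^I\to C^\Lambda\to C^R\to 0$. By lemma~\ref{le:middle} the middle column $C^\Lambda$ is exact, so all of its homology vanishes, and the relevant segment
\[
H_1(C^\Lambda)\to H_1(C^R)\xrightarrow{\ \delta\ }H_0(C^I)\to H_0(C^\Lambda)
\]
collapses to an isomorphism $\delta\colon H_1(C^R)\xrightarrow{\ \cong\ }H_0(C^I)$, in which $H_1(C^R)=\ker(g_\alpha^R)/\mathrm{im}(i_\alpha^R)$ is exactly the obstruction to exactness and $H_0(C^I)=\mathrm{coker}(g_\alpha^I)$ is the cokernel of the bottom map of the left column. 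The argument then closes by quoting lemma~\ref{surjective_g}: since $g_\alpha^I$ is surjective its cokernel is trivial, forcing $H_1(C^R)=0$, which is the desired equality $\ker(g_\alpha^R)=\mathrm{im}(i_\alpha^R)$.

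I expect the only delicate point to be bookkeeping rather than mathematics: one must track the degree shift in the connecting homomorphism carefully, so that the middle homology of the right column is paired with the \emph{bottom} homology of the left column, and the two cited lemmas are applied at exactly the right spots. All the genuine difficulty has already been absorbed into the proofs that the middle column is exact and that $g_\alpha^I$ is surjective; granting those, this corollary is a purely formal consequence of the snake lemma, and no further manipulation of Arnold classes is required.
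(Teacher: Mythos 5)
Your proposal is correct and is essentially identical to the paper's own proof: both treat the columns of diagram~\ref{diagram} as a short exact sequence of chain complexes, dispose of the surjectivity of $g_\alpha^R$ and the vanishing of $g_\alpha^R\circ i_\alpha^R$ by the same easy chases, and then identify $\ker(g_\alpha^R)/\mathrm{im}(i_\alpha^R)$ with $\mathrm{coker}(g_\alpha^I)$ via the long exact homology sequence, killing it by lemma~\ref{surjective_g}. Your extra care about the degree shift in the connecting homomorphism is a fair point of bookkeeping but introduces nothing beyond what the paper does implicitly.
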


In preparation for the proof of theorem~\ref{theorem1}, we need a lemma.
\begin{lemma}
  \label{le:commutative}
  Let $\alpha,\beta$ be two different edges of $\Gamma$. The following diagram is commutative
  \begin{equation}
    \label{eq:R}
  \begin{tikzcd}
    R^r(\Gamma\smallsetminus \alpha)\ar[r,"g^R_\beta"]\ar[d,"i_\alpha^R"]&
    R^r(\Gamma\smallsetminus \alpha/\beta)\ar[d,"i_\alpha^R\otimes \mathrm{Id}"]\otimes e_\beta\\
    R^r(\Gamma)\ar[r,"g^R_\beta"]&R^r(\Gamma/\beta)\otimes e_\beta
  \end{tikzcd}
  \end{equation}
\begin{proof}
  In the formulation of the lemma we have tacitely and legitimately identified the graph $(\Gamma\smallsetminus \alpha)/\beta$ with the graph
  $(\Gamma/\beta)\smallsetminus \alpha$.
  We first note the commutativity of the diagram
  \begin{equation}
    \label{eq:Lambda1}
    \begin{tikzcd}
    \Lambda^r(\Gamma\smallsetminus \alpha)\ar[r,"\psi^\Lambda_\beta"]\ar[d,"i_\alpha^R"]&
    \Lambda^r(\Gamma\smallsetminus \alpha/\beta)\ar[d,"i_\alpha^\Lambda\otimes \mathrm{Id}"]\otimes \Lambda^r[e_\beta]   \\
    \Lambda^r(\Gamma)\ar[r,"\psi^\Lambda_\beta"]&\Lambda^r(\Gamma/\beta) \otimes \Lambda^r[e_\beta]   
    \end{tikzcd}
   \end{equation}
   Since $\psi_\beta^\Lambda$ and $i_\alpha^\Lambda$ are ring maps, it suffices to check this on generators $e_\eta$, which is trivial to do.
   Applying the projection $\pi$, we obtain that the following diagram is commutative:
 \begin{equation}
    \label{eq:Lambda2}
    \begin{tikzcd}
    \Lambda^r(\Gamma\smallsetminus \alpha)\ar[r,"g^\Lambda_\beta"]\ar[d,"i_\alpha^R"]&
    \Lambda^r(\Gamma\smallsetminus \alpha/\beta)\ar[d,"i_\alpha^\Lambda\otimes \mathrm{Id}"]\otimes e_\beta   \\
    \Lambda^r(\Gamma)\ar[r,"g^\Lambda_\beta"]&\Lambda^r(\Gamma/\beta) \otimes e_\beta   
    \end{tikzcd}
   \end{equation}

The statement of the lemma follows from that there is a surjective map from diagram~\ref{eq:Lambda2} to diagram~\ref{eq:R},   
\end{proof}
\end{lemma}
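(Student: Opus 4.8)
The plan is to prove the commutativity of diagram~\eqref{eq:R} by \emph{lifting} it to the free graded commutative algebras $\Lambda^r$, where the vertical maps and the two horizontal composites come from honest ring homomorphisms, by checking commutativity there on generators, and then by pushing the result down to the quotients $R^r$ along the canonical surjections. The first preliminary step is to record the identification $(\Gamma\smallsetminus\alpha)/\beta=(\Gamma/\beta)\smallsetminus\alpha$, which is legitimate because deleting and contracting two distinct edges are independent operations; this is what makes the top-right corner $R^r((\Gamma\smallsetminus\alpha)/\beta)\otimes e_\beta$ meaningful and compatible with the bottom-right corner through $i_\alpha^R\otimes\mathrm{Id}$.

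Next I would establish the commutativity of the lifted square
\[
\begin{tikzcd}
\Lambda^r(\Gamma\smallsetminus\alpha)\ar[r,"\psi_\beta^\Lambda"]\ar[d,"i_\alpha^\Lambda"] & \Lambda^r((\Gamma\smallsetminus\alpha)/\beta)\otimes\Lambda^r[e_\beta]\ar[d,"i_\alpha^\Lambda\otimes\mathrm{Id}"] \\
\Lambda^r(\Gamma)\ar[r,"\psi_\beta^\Lambda"] & \Lambda^r(\Gamma/\beta)\otimes\Lambda^r[e_\beta]
\end{tikzcd}
\]
in which all four arrows are genuine ring homomorphisms. Since a ring homomorphism is determined by its values on generators, it suffices to evaluate both composites on a generator $e_\eta$ with $\eta\in E(\Gamma\smallsetminus\alpha)$. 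There are two cases. If $\eta\neq\beta$, both routes send $e_\eta$ to $e_{[\eta]}\otimes 1$; if $\eta=\beta$, both routes send $e_\beta$ to $1\otimes e_\beta$. Hence the lifted square commutes. This step is essentially the generator check referred to in the statement of Lemma~\ref{le:exterior.square}, now carried out for the pair of edges $\alpha,\beta$.

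I would then transport this commutativity across the projection. Let $\pi$ be the projection of $\Lambda^r(\Gamma/\beta)\otimes\Lambda^r[e_\beta]$ onto its ideal $\Lambda^r(\Gamma/\beta)\otimes e_\beta$, and $\pi'$ the analogous projection in the top-right corner, so that $g^\Lambda_\beta=\pi\circ\psi^\Lambda_\beta$. The key observation is that $i_\alpha^\Lambda\otimes\mathrm{Id}$ acts only on the first tensor factor and therefore preserves the direct-sum splitting $\Lambda^r[e_\beta]=\ZZ\cdot 1\oplus\ZZ\cdot e_\beta$, giving the intertwining relation $\pi\circ(i_\alpha^\Lambda\otimes\mathrm{Id})=(i_\alpha^\Lambda\otimes\mathrm{Id})\circ\pi'$. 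Composing the two equal routes of the lifted square with $\pi$ and using this relation yields that the $g^\Lambda_\beta$-square commutes, namely $(i_\alpha^\Lambda\otimes\mathrm{Id})\circ g^\Lambda_\beta=g^\Lambda_\beta\circ i_\alpha^\Lambda$.

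Finally I would descend to the quotients. By Lemma~\ref{le:exterior.square} and the remark following diagram~\eqref{diagram}, the maps $i_\alpha^\Lambda$, $\psi_\beta^\Lambda$ and $\pi$ all preserve the generalized Arnold ideals, so $g^\Lambda_\beta$ and $i_\alpha^\Lambda$ induce exactly $g^R_\beta$ and $i_\alpha^R$ on $R^r=\Lambda^r/I^r$. The canonical surjections $q\colon\Lambda^r(-)\twoheadrightarrow R^r(-)$ at the four corners then form a levelwise-surjective morphism of squares from the commuting $g^\Lambda_\beta$-square onto diagram~\eqref{eq:R}. Because $q$ is surjective at the top-left corner, every $\bar x\in R^r(\Gamma\smallsetminus\alpha)$ lifts to some $x\in\Lambda^r(\Gamma\smallsetminus\alpha)$; chasing $x$ around the commuting upstairs square and down along $q$ shows the two routes of~\eqref{eq:R} agree on $\bar x$, and since $\bar x$ is arbitrary, \eqref{eq:R} commutes. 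I expect the only real subtlety, and hence the main obstacle, to be precisely the fact that the horizontal maps $g_\beta$ are \emph{not} ring homomorphisms but composites $\pi\circ\psi_\beta$ with a projection onto an ideal: commutativity cannot be checked naively on generators at the $g$-level, so the crux is to run the generator check at the multiplicative level of $\psi_\beta^\Lambda$ and to verify the naturality $\pi\circ(i_\alpha^\Lambda\otimes\mathrm{Id})=(i_\alpha^\Lambda\otimes\mathrm{Id})\circ\pi'$ that licenses transporting it across. Everything else is formal.
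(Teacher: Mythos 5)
Your proposal is correct and follows essentially the same route as the paper's own proof: lift the square to the level of the free algebras $\Lambda^r$, check the $\psi_\beta^\Lambda$-square on generators, apply the projection $\pi$ to obtain the $g_\beta^\Lambda$-square, and descend to $R^r$ along the levelwise surjections. The only difference is one of thoroughness, in your favor: you make explicit the two points the paper leaves implicit, namely the intertwining relation $\pi\circ(i_\alpha^\Lambda\otimes\mathrm{Id})=(i_\alpha^\Lambda\otimes\mathrm{Id})\circ\pi'$ that licenses composing with the projection, and the surjectivity-based diagram chase (lifting $\bar x$ through $q$) that transports commutativity to the quotient.
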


Note that if $\Gamma$ does not have any loops,
the ring map $c:\Lambda[\Gamma]\to \ZZ$, $c(1)=1$ and $ c(e_\alpha)=0$ for all edges $\alpha$ in $\Gamma$ factors over $R^r(\Gamma)$. This is not the case if $\Gamma$ has a loop $\alpha$ because the Arnold relation corresponding to the circuit consisting of he single edge $\alpha$ is not mapped to 0 by $c$.   
It follows that if $\Gamma$ has no loops, the canonical map $\ZZ\to R^r(\Gamma)$ is a split inclusion, with left inverse the map $\eta$ that maps each $e_\alpha$ to 0.
We say that a graph $\Gamma$ satisfies  $(*)$ if both of the following two statement are true.
\begin{itemize}
\item If $\Gamma$ has no loops, for each $\alpha\in E(\Gamma)$ the map
  $i_\alpha:R^r(\Gamma\smallsetminus \alpha)\to R^r(\Gamma)$ is injective.
\item If $\Gamma$ does not have loops or multiple edges and if $x\in R^r(\Gamma)$ and $x\neq \ZZ$, there exists a $\beta\in E(\Gamma)$ such that $g^R_\beta(x)\neq 0$.  
\end{itemize}

\begin{lemma}
  \label{le:induction_step}
Every graph $\Gamma$ satisfies $(*)$.  
  \begin{proof}
We will argue by induction on the number of edges of $\Gamma$.
The graph with one vertex and no edges satisfies $(*)$ for trivial reasons.
 
The induction hypothesis is that every graph with at most $n-1$ edges satisfies $(*)$. Let $\Gamma$ be a graph with $n$ edges. We need to show that $\Gamma$ satisfies $(*)$.

We first show that $i_\alpha^R:R^r(\Gamma\smallsetminus \alpha)\to R^r(\Gamma)$ is injective.
Using lemma~\ref{le:double_edges} we easily reduce to the case that $\Gamma$ has no multiple edges.
The map $i_\alpha$ preserves the direct sum decomposition $R^r(\Gamma)\cong \ZZ  \oplus \ker(\eta)$, so it
suffices to show that if   
$\alpha\in E(\Gamma)$, $x\in R^r(\Gamma)\setminus \ZZ$, then $i_\alpha^R(x)\neq 0$.

Since $\Gamma\smallsetminus \alpha$ has no multiple edges and satisfies $(*)$ by assumption, there is an edge $\beta\in E(\Gamma\smallsetminus \alpha)$ such that $g_\beta^R(x)\neq 0\in R^r(\Gamma\smallsetminus\alpha/\beta)$. Since $\Gamma/\beta$ has no loops, it satisfies $(*)$, $i_\alpha^Rg_\beta^R(x)\neq 0$.
Now apply lemma~\ref{le:commutative} to prove that $i^R_\alpha(x)\neq 0\in R^r(\Gamma)$ as required.

We finally need to prove that if $\Gamma$ has no multiple edges,
$x\in R^r(\Gamma)\setminus \ZZ$ and
$g^R_\eta(x)=0$ for all $\eta\in E(\Gamma)$, then $x=0$. Pick any $\alpha\in E(\Gamma)$. Since
$g^R_\alpha(x)=0$, by lemma~\ref{le:preliminary_exactness} there is an
$y\in R^r(\Gamma\smallsetminus \alpha)$ such that $i_\alpha^R(y)=x$.
Using lemma~\ref{le:commutative} again, we see that
for any $\beta\in E(\gamma\smallsetminus \alpha)$:
\[
(i^R_\alpha\otimes \mathrm{Id}) g_\beta^R(y)=g^R_\beta i_\alpha^R(y)=g_\beta^R(x)=0
\]
Because $\Gamma/\beta$ satisfies $(*)$, and because $(\Gamma\smallsetminus\alpha)/\beta$
either equals $\Gamma/\beta$ or $(\Gamma/\beta)\smallsetminus \alpha$
the map
$i^R_\alpha\otimes \mathrm{Id}:R^r(\Gamma\smallsetminus\alpha/\beta)\to R^r(\gamma/\beta)$ is injective, so that $g_\beta^R(y)=0$. Because this is true for every
$\beta\in E(\Gamma\smallsetminus \alpha)$, and $\Gamma\smallsetminus \alpha$ 
 also satisfies $(*)$, it follows that $y=0$ so that $x=0$.
\end{proof}
\end{lemma}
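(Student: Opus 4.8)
The plan is to argue by induction on the number of edges of $\Gamma$, establishing the two clauses of $(*)$ simultaneously; this is essential because each clause for $\Gamma$ will be deduced from the \emph{other} clause applied to graphs with strictly fewer edges. The base case is the graph with one vertex and no edges, where $R^r(\Gamma)=\ZZ$ and both clauses hold vacuously (there is no edge $\alpha$ or $\beta$ to choose, and no element outside $\ZZ$). For the inductive step I assume that every graph with at most $n-1$ edges satisfies $(*)$ and let $\Gamma$ have $n$ edges.

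For the first clause, injectivity of $i_\alpha^R$ when $\Gamma$ has no loops, I would first apply Lemma~\ref{le:double_edges} to reduce to the case that $\Gamma$ has no multiple edges, since deleting a parallel edge leaves $R^r$ unchanged. Because $i_\alpha^R$ respects the canonical splittings $R^r(-)\cong\ZZ\oplus\ker(\eta)$ on source and target (coming from the augmentation $\eta$ that kills each $e_\beta$), it suffices to prove $i_\alpha^R(x)\neq 0$ for every $x\in R^r(\Gamma\smallsetminus\alpha)\setminus\ZZ$. Now $\Gamma\smallsetminus\alpha$ has $n-1$ edges and is simple, so by the inductive hypothesis it satisfies the \emph{second} clause of $(*)$; this produces a $\beta\in E(\Gamma\smallsetminus\alpha)$ with $g_\beta^R(x)\neq0$ in $R^r((\Gamma\smallsetminus\alpha)/\beta)\otimes e_\beta$. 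Since $\Gamma/\beta$ has $n-1$ edges and no loops, the inductive hypothesis gives the \emph{first} clause for $\Gamma/\beta$, making $i_\alpha^R$ injective on $R^r((\Gamma/\beta)\smallsetminus\alpha)=R^r((\Gamma\smallsetminus\alpha)/\beta)$; hence $(i_\alpha^R\otimes\mathrm{Id})(g_\beta^R(x))\neq0$. Feeding this into the commutative square of Lemma~\ref{le:commutative}, which identifies $g_\beta^R(i_\alpha^R(x))$ with $(i_\alpha^R\otimes\mathrm{Id})(g_\beta^R(x))$, forces $i_\alpha^R(x)\neq0$, as required.

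For the second clause I argue by contradiction: assume $\Gamma$ has no loops or multiple edges, that $x\in R^r(\Gamma)\setminus\ZZ$, and that $g_\eta^R(x)=0$ for every $\eta\in E(\Gamma)$, and I aim to conclude $x\in\ZZ$. Fixing any $\alpha\in E(\Gamma)$, the exact sequence of Corollary~\ref{le:preliminary_exactness} lets me use $g_\alpha^R(x)=0$ to lift $x$ to some $y\in R^r(\Gamma\smallsetminus\alpha)$ with $i_\alpha^R(y)=x$. For each $\beta\in E(\Gamma\smallsetminus\alpha)$, Lemma~\ref{le:commutative} yields $(i_\alpha^R\otimes\mathrm{Id})(g_\beta^R(y))=g_\beta^R(i_\alpha^R(y))=g_\beta^R(x)=0$; since $\Gamma/\beta$ has fewer edges and no loops, the first clause of $(*)$ (inductive hypothesis) makes $i_\alpha^R\otimes\mathrm{Id}$ injective, using the identification $(\Gamma\smallsetminus\alpha)/\beta=(\Gamma/\beta)\smallsetminus\alpha$, so $g_\beta^R(y)=0$. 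As this holds for all $\beta\in E(\Gamma\smallsetminus\alpha)$, and $\Gamma\smallsetminus\alpha$ is simple and satisfies the second clause by induction, its contrapositive forces $y\in\ZZ$, whence $x=i_\alpha^R(y)\in\ZZ$, contradicting $x\notin\ZZ$.

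The main obstacle is not any single computation but the logical bookkeeping of this mutual induction: the first clause for $\Gamma$ invokes the second clause for $\Gamma\smallsetminus\alpha$ together with the first clause for $\Gamma/\beta$, while the second clause for $\Gamma$ invokes the first clause for $\Gamma/\beta$ together with the second clause for $\Gamma\smallsetminus\alpha$. I must therefore verify that every appeal is genuinely to a graph with strictly fewer edges and that the relevant ambient hypotheses are met — in particular that contracting an edge of a loopless simple graph produces a loopless graph (which may, however, acquire multiple edges, which is precisely why only the first clause, and never the second, may be applied to $\Gamma/\beta$), and that the identification $(\Gamma\smallsetminus\alpha)/\beta=(\Gamma/\beta)\smallsetminus\alpha$ is used consistently. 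Once this scaffolding is in place, the algebraic inputs — the $\eta$-splitting, the exactness of Corollary~\ref{le:preliminary_exactness}, and the commutativity of Lemma~\ref{le:commutative} — enter mechanically.
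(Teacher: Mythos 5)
Your proposal is correct and takes essentially the same approach as the paper: the identical mutual induction on the number of edges, with the same base case, the reduction via Lemma~\ref{le:double_edges}, the $\eta$-splitting, the lift through Corollary~\ref{le:preliminary_exactness}, and the commutative square of Lemma~\ref{le:commutative}, each clause for $\Gamma$ being deduced from the other clause on $\Gamma\smallsetminus\alpha$ together with the first clause on $\Gamma/\beta$. Your formulation of the second clause as an explicit contradiction concluding $y\in\ZZ$ (hence $x\in\ZZ$) is in fact slightly more precise than the paper's ``$y=0$ so that $x=0$'', and your remark that $\Gamma/\beta$ may acquire multiple edges --- so only the first clause may be applied to it --- makes explicit a point the paper leaves implicit.
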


We sum up in
\begin{teorema}
  The columns of diagram~\ref{diagram} are short exact.
  \begin{proof}
    The middle column is exact by lemma~\ref{le:middle}. The
    right hand column is exact by lemma~\ref{le:preliminary_exactness} and  lemma~\ref{le:induction_step}. The exactness of the left hand column follows from this by the nine-lemma (or by simple diagram chase).
  \end{proof}
\end{teorema}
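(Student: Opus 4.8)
The plan is to treat the three columns of diagram~\ref{diagram} separately, in order of increasing dependence on the earlier results, exploiting that every \emph{row} of the diagram is short exact by the very definition $R^r(-)=\Lambda^r(-)/I^r(-)$. First I would dispose of the middle column, then the right column, and finally deduce the left column formally from the other two.

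The middle column is handled entirely by lemma~\ref{le:middle}, which already asserts its exactness. For the right column I would assemble two inputs. Corollary~\ref{le:preliminary_exactness} supplies exactness at $R^r(\Gamma)$ together with surjectivity of $g_\alpha^R$; what remains is injectivity of the map $i_\alpha^R:R^r(\Gamma\smallsetminus\alpha)\to R^r(\Gamma)$. This is precisely the first clause of property $(*)$, which holds for $\Gamma$ by lemma~\ref{le:induction_step}, and it applies because $\Gamma$ is loop-free under the standing assumption of Definition~\ref{R_r}. Combining these, the right column $0\to R^r(\Gamma\smallsetminus\alpha)\to R^r(\Gamma)\to R^r(\Gamma/\alpha)\otimes e_\alpha\to 0$ is short exact.

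With the middle and right columns short exact and all three rows short exact, the left column follows by the nine-lemma: transposing the diagram so that columns become rows, the exactness of the (transposed) middle and bottom rows, together with exactness of all (transposed) columns, forces exactness of the (transposed) top row, which is the left column $0\to I^r(\Gamma\smallsetminus\alpha)\to I^r(\Gamma)\to I^r(\Gamma/\alpha)\otimes e_\alpha\to 0$. If one prefers to avoid invoking the named lemma, the same conclusion is reached by a short diagram chase through diagram~\ref{diagram}.

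I expect no genuine obstacle inside this theorem itself: its real content has been front-loaded into the injectivity of $i_\alpha^R$, i.e.\ into the inductive argument of lemma~\ref{le:induction_step}, and into the surjectivity of $g_\alpha^I$ from lemma~\ref{surjective_g}. Once those are granted, the present statement is a formal assembly. The only point requiring a moment's care is to confirm that the hypotheses of property $(*)$ are met --- namely that $\Gamma$ has no loops --- so that the injectivity clause is actually available at the step where the right column is built; every other verification is routine.
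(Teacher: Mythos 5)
Your proposal is correct and follows essentially the same route as the paper: the middle column via lemma~\ref{le:middle}, the right column by combining corollary~\ref{le:preliminary_exactness} with the injectivity clause of $(*)$ from lemma~\ref{le:induction_step}, and the left column by the nine-lemma or a diagram chase using the short exact rows. Your extra care in checking that $\Gamma$ is loop-free so that $(*)$ applies is a sound (if implicit in the paper's standing assumptions) refinement, not a deviation.
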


\begin{proof}[Proof of theorem~\ref{theorem1}]
  The injectivity follows from lemma~\ref{le:induction_step}.
  The map $\psi_\alpha^R$ is surjective and the map $\iota_\alpha^R$ is injective, so it suffices to show that if $\psi_\alpha^R(x)\in \mathrm{im}{\iota_\alpha^R}$, then $x \in \mathrm{im}(i_\alpha^R)$. But
  $\psi_\alpha^R(x)\in \mathrm{im}{\iota_\alpha^R}$ if and only if
  $g_\alpha^R(x)=0$, so the theorem follows from the exactness of the right column in diagram~\ref{diagram}.
\end{proof}

	\section{Deletion-contraction in the space
          $\Conf_{r}(\Gamma)$}
        \label{sec:Space}
   In this section we will prove that  there is an isomorphism between the ring $R^r(\Gamma)$ defined in the previous section and the cohomology ring of $\Conf_r(\Gamma)$. Moreover, \textit{Lemma }\ref{lemma surjective} provides the existence of a short exact
   	sequence of the form
   	\begin{equation*}
   	0\rightarrow
   	H^{\ast}(\Conf_r(\Gamma\smallsetminus e))\rightarrow
   	H^{\ast}(\Conf_r(\Gamma)) \rightarrow
   	H^{\ast-r+1}(\Conf_{r}(\Gamma/e))\rightarrow 0.
         \end{equation*}
   	The first step is to describe the deletion-contraction long   exact sequence that occurs for configuration spaces.
\subsection{The long exact sequence}

%	, provided by \emph{Corollary
%	}\ref{corallary ses spaces}, 
%is the existence of a short exact
%	sequence of the form
%	\begin{equation*}
%	0\rightarrow
%	H^{\ast}(\Conf_r(\Gamma\smallsetminus e))\rightarrow
%	H^{\ast}(\Conf_r(\Gamma)) \rightarrow
%	H^{\ast-r+1}(\Conf_{r}(\Gamma/e))\rightarrow 0.
%      \end{equation*}

	We will prove the following theorem.
	\begin{teorema}\label{del-contr space}
		There is a long exact sequence in cohomology
		\begin{salign*}
			\cdots&\longrightarrow H^{\ast}(\Conf_r(\Gamma\smallsetminus e))\longrightarrow
			H^{\ast}(\Conf_r(\Gamma))\\
			&\longrightarrow H^{\ast-r+1}(\Conf_r(\Gamma/e))\longrightarrow H^{\ast+1}(\Conf_r(\Gamma\smallsetminus e))\longrightarrow\cdots
		\end{salign*}
	\end{teorema}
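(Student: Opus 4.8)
The plan is to construct the long exact sequence as the cohomology long exact sequence of a pair (or equivalently of a cofibration/fibration), realizing the three configuration spaces geometrically. The key geometric input is that deleting an edge enlarges the configuration space, so $\Conf_r(\Gamma) \subset \Conf_r(\Gamma \smallsetminus e)$ is an open subspace, and the complement should be understood in terms of $\Conf_r(\Gamma/e)$. Concretely, if $e = \alpha_{i,j}$, then $\Conf_r(\Gamma\smallsetminus e)$ is the space where all edge-constraints of $\Gamma$ except possibly $x_i \neq x_j$ hold, while $\Conf_r(\Gamma)$ adds the open condition $x_i \neq x_j$. The locus where $x_i = x_j$ (but all other constraints of $\Gamma \smallsetminus e$ still hold) is precisely $\Conf_r(\Gamma/e)$, since identifying the two vertices incident to $e$ matches setting $x_i = x_j$.

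First I would set up the relevant spaces carefully: let $X = \Conf_r(\Gamma\smallsetminus e)$, let $U = \Conf_r(\Gamma) \subset X$ be the open subset, and let $Z = X \smallsetminus U$ be the closed subset cut out by $x_i = x_j$. I would identify $Z \cong \Conf_r(\Gamma/e)$ via the diagonal embedding that collapses $x_i$ and $x_j$ to a single point. The plan is then to apply either the long exact sequence of the pair $(X, U)$ in cohomology together with a Thom isomorphism / tubular neighborhood argument, or equivalently the Gysin sequence associated to the closed submanifold $Z \subset X$. Since all these spaces are smooth manifolds (open subsets of $\mathbb{R}^{rn}$ and a linear slice thereof), $Z$ is a smooth closed submanifold of real codimension $r$ (the condition $x_i = x_j$ imposes $r$ equations in $\mathbb{R}^r$), with trivial normal bundle.

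The Gysin sequence for the codimension-$r$ submanifold $Z \hookrightarrow X$ with open complement $U$ then reads
\begin{salign*}
\cdots \longrightarrow H^{\ast}(X) \longrightarrow H^{\ast}(U) \longrightarrow H^{\ast - r + 1}(Z) \longrightarrow H^{\ast+1}(X) \longrightarrow \cdots
\end{salign*}
which upon substituting $X = \Conf_r(\Gamma\smallsetminus e)$, $U = \Conf_r(\Gamma)$, and $Z = \Conf_r(\Gamma/e)$ is exactly the asserted sequence; the degree shift by $r-1$ (rather than $r$) comes from the Thom class of the codimension-$r$ normal bundle contributing degree $r$, combined with the connecting map shifting by $1$, so that the map $H^\ast(U) \to H^{\ast-r+1}(Z)$ is the residue/restriction-to-link map.

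The main obstacle I expect is the clean identification of the link geometry and the Thom isomorphism. One must verify that a tubular neighborhood of $Z$ in $X$ is a genuine $r$-disk bundle whose complement deformation retracts appropriately, so that the pair $(X, U)$ excises to the Thom space of the normal bundle of $Z$; establishing $H^\ast(X, U) \cong \tilde H^\ast(\mathrm{Thom}) \cong H^{\ast - r}(Z)$ requires the normal bundle to be orientable (trivial here, since it is the pullback of the tangent bundle of the factor $\mathbb{R}^r$ via the difference map $x_i - x_j$). A secondary subtlety is checking that $Z$ is really all of $\Conf_r(\Gamma/e)$ and not a proper subset — one must confirm that no constraint of $\Gamma/e$ is violated on the diagonal and that multiple edges created by the contraction do not cause over-counting, which is exactly why the earlier algebraic setup permits multiple edges in $\Gamma/e$. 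Once excision and the Thom isomorphism are in place, the long exact sequence is formal, so the real work is purely in the transversality and tubular-neighborhood bookkeeping.
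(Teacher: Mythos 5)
Your plan is correct --- the degree bookkeeping works out: with $X=\Conf_r(\Gamma\smallsetminus e)$, $U=\Conf_r(\Gamma)$ and $Z=X\smallsetminus U$, the Thom isomorphism $H^{k}(X,U)\cong H^{k-r}(Z)$ turns the long exact sequence of the pair $(X,U)$ into exactly the stated sequence --- but it runs along a genuinely different track from the paper's. You use relative cohomology: $Z$ is the slice $x_a=x_b$, a closed smooth codimension-$r$ submanifold of $X$ canonically homeomorphic to $\Conf_r(\Gamma/e)$ (the paper makes the same identification, calling it $A_e(\Gamma)$); excision onto a tubular neighborhood plus the Thom isomorphism for the normal bundle --- trivialized globally by the difference map $x\mapsto x_a-x_b$, so orientability is automatic, as you note --- then does the rest. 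The paper instead avoids relative cohomology, Thom classes and tubular-neighborhood theory altogether: it covers $\Conf_r(\Gamma\smallsetminus e)$ by the two open sets $\Conf_r(\Gamma)$ and $V_e(\Gamma)=\{|x_a-x_b|<\tfrac12 m_{a,b}(x)\}$, shows by explicit formulas (Lemma~\ref{lemma hom equi}) that $V_e(\Gamma)\simeq\Conf_r(\Gamma/e)$ and $\Conf_r(\Gamma)\cap V_e(\Gamma)\simeq S^{r-1}\times\Conf_r(\Gamma/e)$, and runs Mayer--Vietoris, using the K\"unneth formula and cancelling the $1\otimes H^\ast(\Conf_r(\Gamma/e))$ summand against $H^\ast(V_e(\Gamma))$ to extract the shifted sequence. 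The two arguments encode the same geometry (the link of the diagonal locus is an $S^{r-1}$), and the paper's $V_e(\Gamma)$, with its configuration-dependent radius $m_{a,b}(x)$, is precisely the explicit tube your tubular-neighborhood step must produce: note that a constant-radius tube would not lie inside $X$, since particles $x_c$ adjacent to $a$ can come arbitrarily close to $x_a$, so when fleshing out your sketch you must either invoke the tubular neighborhood theorem for the closed submanifold $Z\subset X$ (legitimate, as $Z$ is closed in $X$, though not in $\mathbb{R}^{rn}$) or write down a radius function exactly as the paper does. What your route buys is standard machinery, no K\"unneth argument, and no summand-cancellation bookkeeping; what the paper's buys is a self-contained elementary argument with explicit homotopy equivalences and no appeal to excision, Thom isomorphisms, or normal-bundle orientations.
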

        Before we turn to the proof, we make a few preliminary observations.
         
		Let $e$ be an edge in $\Gamma$ between the vertices $a$ and
		$b$. The space $\Conf_{r}(\Gamma)$ is an open subspace of
		$\Conf_{r}(\Gamma\smallsetminus e)$. The complement
		\begin{equation*}
		A_{e}(\Gamma)=\Conf_{r}(\Gamma\smallsetminus e)-\Conf_{r}(\Gamma)
		\end{equation*}
		is a closed subspace in $\Conf_{r}(\Gamma\smallsetminus e)$ and
		\begin{equation*}
		A_{e}(\Gamma)=\{(x_{1},\dots,x_{n})\in\mathbb{R}^{rn};x_{i}\neq x_{j} \text{ if } \alpha_{i,j}\in E(\Gamma)\smallsetminus\{e\} \text{ while } x_{a}= x_{b}\}.
		\end{equation*}
  		There is a canonical homeomorphism between $A_{e}(\Gamma)$ and
		$\Conf_{r}(\Gamma/e)$ sending $(x_{1},\dots,x_{n})$ to
		$(x_{1},\dots, x_{a},\dots,\widehat{x_{b}},\dots,x_{n})$.
                Let
                $m_{a,b}(x)>0$
                be the minimum of the numbers $|x_a-x_c|$ such that $c\not = b$, but $c$ is connected by an edge to $a$. This number will be independent of $x_b$. 
		We define an open neighborhood $V_{e}(\Gamma)$ of
		$A_e(\Gamma)$ in $\Conf_{r}(\Gamma\smallsetminus e)$ in the following way
		\begin{equation*}
		V_{e}(\Gamma)=\{x=(x_{1},\dots,x_{n})\in \Conf_{r}(\Gamma\smallsetminus e);|x_{a}-x_{b}|<\tfrac 12 m_{a,b}(x) \}
		\end{equation*} 
                	\begin{lemma}\label{lemma hom equi}
		$\Conf_r(\Gamma)\cap V_{e}$ is homotopy equivalent to
		$\mathbb{S}^{r-1}\times \Conf_{r}(\Gamma/e)$.
	\end{lemma}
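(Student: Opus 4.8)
The plan is to exhibit an explicit homotopy equivalence between $U:=\Conf_r(\Gamma)\cap V_e$ and $\mathbb{S}^{r-1}\times\Conf_r(\Gamma/e)$, using the unit vector in the direction $x_b-x_a$ for the sphere factor and a carefully chosen projection for the second factor. Writing $v=x_b-x_a$, the map $s(x)=v/|v|$ is defined on all of $U$ since $x_a\neq x_b$ there. The subtle choice is the projection to $\Conf_r(\Gamma/e)$: rather than recording the merged vertex by the coordinate $x_a$ (as the homeomorphism $A_e(\Gamma)\cong\Conf_r(\Gamma/e)$ does), I would use $x_b$. Concretely, define $\pi(x)$ to be the configuration indexed by $V(\Gamma)\smallsetminus\{b\}$ whose value at the merged vertex is $x_b$ and whose value at every other vertex $c$ is $x_c$.

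First I would check that $\pi$ lands in $\Conf_r(\Gamma/e)$. The neighbours of the merged vertex in $\Gamma/e$ are the neighbours of $a$ and of $b$. For a neighbour $c$ of $b$ one has $x_b\neq x_c$ directly, since $bc$ is an edge of $\Gamma$. For a neighbour $c'$ of $a$ (with $c'\neq b$) one gets $x_b\neq x_{c'}$ from the defining inequality of $V_e$: since $|x_a-x_b|<\tfrac12 m_{a,b}(x)$ while $|x_a-x_{c'}|\geq m_{a,b}(x)$, the triangle inequality gives $|x_b-x_{c'}|\geq \tfrac12 m_{a,b}(x)>0$. This is exactly the point of measuring $m_{a,b}$ through the neighbours of $a$: it forces $x_b$ to sit strictly inside the region controlled by $x_a$, so that the constraints coming from $b$'s neighbours become constraints on the base point and disappear from the fibre.

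Next I would identify the fibre of $\pi$. Fixing the image $y\in\Conf_r(\Gamma/e)$, with merged coordinate $p$ and remaining coordinates $q_{c'}$, a point of $U$ over $y$ is determined by the free position $z=x_a$, subject only to $z\in C_y\smallsetminus\{p\}$, where
\[
C_y=\bigcap_{c'\sim a,\ c'\neq b}\bigl\{z\in\mathbb{R}^r : 2|z-p|<|z-q_{c'}|\bigr\}.
\]
After squaring, each set in this intersection is an open Apollonius ball containing $p$ in its interior, so $C_y$ is an open convex neighbourhood of $p$ (equal to $\mathbb{R}^r$ when $a$ has no neighbour other than $b$). Hence the fibre $C_y\smallsetminus\{p\}$ is a convex open set minus an interior point, which deformation retracts onto a sphere and is homotopy equivalent to $\mathbb{S}^{r-1}$. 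Crucially, the full $V_e$-condition and all edge constraints are encoded in the single condition $z\in C_y\smallsetminus\{p\}$, with the edges not meeting $a$ and the neighbours of $b$ all absorbed into the base.

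Finally I would assemble the map $\phi=(s,\pi):U\to\mathbb{S}^{r-1}\times\Conf_r(\Gamma/e)$ together with a homotopy inverse $\psi$ sending $(u,y)$ to the configuration with $x_b=p$, $x_c=q_c$, and $x_a=p-\epsilon(y)\,u$, where $\epsilon(y)>0$ is a continuous function small enough that $p-\epsilon(y)u\in C_y$ for all $u$ (for instance a fixed fraction of $\min_{c'\sim a}|p-q_{c'}|$, or a constant when there are no such neighbours). One checks $\phi\circ\psi=\mathrm{id}$ on the nose, while $\psi\circ\phi$ differs from the identity only in the radial distance of $x_a$ from $x_b$; the straight-line homotopy in that radial parameter stays inside $C_y\smallsetminus\{p\}$ by convexity, giving $\psi\circ\phi\simeq\mathrm{id}$. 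I expect the main obstacle to be precisely the well-definedness and fibre analysis of $\pi$: the naive projection using $x_a$ fails because a neighbour of $b$ may collide with $x_a$, and the resolution---recording the merged vertex by $x_b$ and recognising the fibre as a convex (Apollonius) region minus its centre---is the real content. Once convexity is in hand the homotopies are routine.
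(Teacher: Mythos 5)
Your proof is correct, and in outline it is the same as the paper's: an explicit pair of maps, with the unit vector of $x_b-x_a$ giving the sphere factor, the forgetting of one endpoint of $e$ giving the $\Conf_r(\Gamma/e)$ factor, a section placing the forgotten point at a small radius, and a radial homotopy. But your one deliberate deviation is genuinely different and in fact repairs a defect in the paper's own argument. The paper's map $f$ records the merged vertex by $x_a$ (deleting $x_b$); since $m_{a,b}(x)$ only measures distances from $x_a$ to neighbours of $a$, nothing in the definition of $V_e$ prevents $x_a=x_c$ for a vertex $c$ adjacent to $b$ but not to $a$ (e.g.\ the path $a-b-c$ with $e=ab$, where $m_{a,b}=\infty$), so the paper's $f$ as written need not land in $\Conf_r(\Gamma/e)$. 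Your projection via $x_b$, together with the triangle-inequality estimate $|x_b-x_{c'}|\geq\tfrac12 m_{a,b}(x)>0$ for neighbours $c'$ of $a$, is exactly what makes the map well defined; an alternative repair would be to let $m_{a,b}$ range over neighbours of $a$ \emph{and} $b$, which keeps the paper's formulas. Your version also avoids two smaller slips in the paper: the section $g$ there places $x_b$ at distance $m_{a,b}(x)$ from $x_a$, violating the strict inequality $|x_a-x_b|<\tfrac12 m_{a,b}(x)$ defining $V_e$ (your $\epsilon(y)$, a small fixed fraction of $\min_{c'\sim a}|p-q_{c'}|$, is the correct choice), and $f\circ g$ is the antipodal map on the sphere factor rather than the identity, which is harmless but sloppy. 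Finally, where the paper asserts $gf\simeq\mathrm{id}$ is ``clearly homotopic,'' your identification of the fibre as a convex open Apollonius intersection $C_y$ minus its centre makes the radial straight-line homotopy a genuine verification, at the small cost that your homotopy moves $x_a$ (on which $m_{a,b}$ depends), a point your $C_y$ encoding handles correctly since it packages the $V_e$-condition pointwise in $z$.
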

	\begin{proof}
		$\Conf_r(\Gamma)\cap V_{e}$ is the space
		\begin{equation*}
		\{(x_{1},\dots,x_{n})\in \Conf_{r}(\Gamma):0<|x_{a}-x_{b}|<\tfrac 12 m_{a,b}(x)\}
		\end{equation*}
		We define the maps
		\begin{equation*}
		f:\Conf_r(\Gamma)\cap V_{e}\rightarrow\mathbb{S}^{r-1}\times \Conf_{r}(\Gamma/e)
		\end{equation*}
		by
		\begin{equation*}
		f((x_1,\dots,x_n))=\left(\frac{x_a-x_b}{|x_a-x_b|},(x_1,\dots,x_a,\dots,\widehat{x_b},\dots,x_n)\right)
		\end{equation*}
		and
		\begin{equation*}
		g:\mathbb{S}^{r-1}\times \Conf_{r}(\Gamma/e)\rightarrow \Conf_r(\Gamma)\cap V_{e}
		\end{equation*}
		by
		\begin{equation*}
		g(y, (x_{1},\dots,x_{n}))=\left(x_{1},\dots,x_a,\dots,x_a+m_{a,b}(x) y,\dots,x_{n}\right)
		\end{equation*}
		Now $gf$ is clearly homotopic to the identity and $fg$
		equals the identity.
	\end{proof}

        \begin{proof}[Proof of theorem~\ref{del-contr space}]
		We have two open subspaces $\Conf_{r}(\Gamma)$ and
		$V_{e}(\Gamma)$ of $\Conf_{r}(\Gamma \smallsetminus e)$ such that
		$\Conf_{r}(\Gamma)\cup
		V_{e}(\Gamma)=\Conf_{r}(\Gamma \smallsetminus e)$. There is a pushout
		diagram
		
		\begin{equation*}
		\begin{tikzcd}
		&V_e(\Gamma)\cap \Conf_r(\Gamma)\ar[r]\ar[d]& V_e(\Gamma)\ar[d]&\\
		&\Conf_r(\Gamma)\ar[r]&\Conf_r(\Gamma\smallsetminus e)&\\
		\end{tikzcd}
		\end{equation*}
		We obtain a Mayer-Vietoris long exact sequence in
		cohomology
		\begin{equation*}
		\begin{split}
		\cdots&\xrightarrow{\ \hphantom{\psi^\ast}\ }
		H^{\ast}(\Conf_r(\Gamma\smallsetminus e))\xrightarrow{\ \phi^\ast\ }
		H^{\ast}(\Conf_r(\Gamma))\oplus
		H^{\ast}(V_{e}(\Gamma))%\xrightarrow{\ \psi^\ast\ }
		\\
		&\xrightarrow{\ \psi^\ast\ } H^{\ast}(\Conf_r(\Gamma)\cap
		V_{e}(\Gamma))\xrightarrow{\ \delta^{\ast}\ }
		H^{\ast+1}(\Conf_r(\Gamma\smallsetminus e))\xrightarrow{\ \hphantom{\psi^\ast}\ }\cdots
		\end{split}
		\end{equation*}
		where $\phi$ is the map assigning to each cohomology class
		$x$ its restrictions
		$(x|_{\Conf_{r}(\Gamma)},x|_{V_{e}(\Gamma)})$ and
		$\psi(x,y)=x-y$.
		
		We notice that $V_{e}(\Gamma)$ is homotopy equivalent to
		$\Conf_{r}(\Gamma/e)$ and by \textit{Lemma} \ref{lemma hom equi}
		$\Conf_r(\Gamma)\cap V_{e}$ is homotopy equivalent to
		$\mathbb{S}^{r-1}\times \Conf_{r}(\Gamma/e)$. Let $[\mu]$
		denote the fundamental class of $\mathbb{S}^{r-1}$. By the
		Kunneth formula, we can rewrite the long exact sequence as
		\begin{equation*}
		\begin{split}
		\cdots&\longrightarrow
		H^{\ast}(\Conf_r(\Gamma\smallsetminus e))\longrightarrow
		H^{\ast}(\Conf_r(\Gamma))\oplus
		H^{\ast}(\Conf_{r}(\Gamma/e))%\longrightarrow
		\\
		&\longrightarrow\bigoplus_{k+l=\ast}
		H^{k}(\mathbb{S}^{r-1})\otimes
		H^{l}(\Conf_{r}(\Gamma/e))\longrightarrow
		H^{\ast+1}(\Conf_r(\Gamma\smallsetminus e)\longrightarrow\cdots
		\end{split}
		\end{equation*}
		This implies the existence of the long exact sequence
		\begin{equation*}
		\begin{split}
		\cdots&\longrightarrow H^{\ast}(\Conf_r(\Gamma \smallsetminus e))\longrightarrow
		H^{\ast}(\Conf_r(\Gamma))%\longrightarrow
		\\
		&\longrightarrow [\mu]
		H^{\ast}(\Conf_{r}(\Gamma/e))\longrightarrow
		H^{\ast+1}(\Conf_r(\Gamma \smallsetminus e))\longrightarrow\cdots
		\end{split}
		\end{equation*}
		Finally, using the isomorphism
		$[\mu] H^{\ast-r+1}(\Conf_{r}(\Gamma/e))\cong
		H^{\ast-r+1}(\Conf_{r}(\Gamma/e))$ we have the
		deletion-contraction long exact sequence for generalised
		configuration spaces:
		\begin{salign*}
			\cdots&\longrightarrow H^{\ast}(\Conf_r(\Gamma\smallsetminus e))\longrightarrow
			H^{\ast}(\Conf_r(\Gamma))
			\\
			&\longrightarrow
			H^{\ast-r+1}(\Conf_{r}(\Gamma/e))\longrightarrow
			H^{\ast+1}(\Conf_r(\Gamma\smallsetminus e))\longrightarrow\cdots
		\end{salign*}
		% ...............................$H^{\ast}(\Conf(\Gamma)\cap
		% V_{e}(\Gamma))=H^{\ast}(\Conf_{r}(\Gamma/e))$ since
              \end{proof}

\subsection{The map from $R^r(\Gamma)$.}
Let $\Gamma$ be a graph and $r$ a natural number. 
 For any edge
	$e=e(v_1,v_2)\in E(\Gamma)$, ordered by that $v_1<v_2$, there is a map
	\begin{equation*}
	p_e:\Conf_r(\Gamma)\to S^{r-1}
	\end{equation*}
	defined by
	\begin{equation*}
	p_{e}(x)\mapsto\frac{x_{v_2}-x_{v_1}}{\vert{x_{v_2}-x_{v_1}}\vert}\in S^{r-1}\subset \mathbb{R}^{r}\setminus \{0\}.
	\end{equation*} 
If all edges in $\Gamma$ have an orientation, we can combine these maps to a map
	\begin{equation*}
	p(\Gamma):\Conf_r(\Gamma)\to (S^{r-1})^{E(\Gamma)}.
	\end{equation*}
	We choose a standard generator
	$[S^{r-1}]\in H^{r-1}(S^{r-1})$. After choosing a total order of the edges,  we can identify $H^*((S^{r-1})^{E(\Gamma)})$ with the ring $\Lambda [E(\Gamma)]$. If $r$ is even, this identification depends of the order of the edges, but not on the orientation of the edges. If $r$ is odd, the identification depends on the orientation of the edges, but not of the order of the edges. In both cases, two different choices differ by an isomorphism. 

	\begin{definizione}
		Let $r$ be an even number, the maps $p_r(\Gamma)$ induce
		ring homeomorphisms
		\begin{equation*}
		p_r(\Gamma)^*:\Lambda[\Gamma]\to H^*(\Conf_r(\Gamma));
		\quad p_r(\Gamma)(e)=p_{e}^*([S^{r-1}]).
		\end{equation*}
	\end{definizione}
	\begin{lemma}\label{lemma surjective}
          The map $p_r(\Gamma)^*$ is surjective.
         		There is a short exact sequence
		\begin{equation*}
		0\rightarrow H^{\ast}(\Conf(_r\Gamma\smallsetminus e))\rightarrow
		H^{\ast}(\Conf_r(\Gamma)) \rightarrow
		H^{\ast-r+1}(\Conf_{r}(\Gamma/e))\rightarrow 0.
		\end{equation*} 
	\end{lemma}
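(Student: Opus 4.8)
The plan is to prove both assertions simultaneously by induction on the number of edges of $\Gamma$, with the long exact sequence of theorem~\ref{del-contr space} as the main engine. Write that sequence as
\[
\cdots\to H^{*}(\Conf_r(\Gamma\smallsetminus e))\xrightarrow{\,i^*\,} H^{*}(\Conf_r(\Gamma))\xrightarrow{\,q\,} H^{*-r+1}(\Conf_r(\Gamma/e))\xrightarrow{\,\delta\,} H^{*+1}(\Conf_r(\Gamma\smallsetminus e))\to\cdots,
\]
where $i^*$ is induced by the open inclusion $\Conf_r(\Gamma)\subset\Conf_r(\Gamma\smallsetminus e)$ and $q$ is restriction to $\Conf_r(\Gamma)\cap V_e$ followed by the Künneth projection onto the fundamental class $[\mu]\in H^{r-1}(S^{r-1})$. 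The base case (no edges) is immediate, since then $\Conf_r(\Gamma)$ is contractible and $\Lambda[\Gamma]=\ZZ$. In the inductive step both $\Gamma\smallsetminus e$ and $\Gamma/e$ have fewer edges, so I may assume $p_r(\Gamma\smallsetminus e)^*$ and $p_r(\Gamma/e)^*$ are surjective.

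The first key step is to compute the ring map $r^*:H^*(\Conf_r(\Gamma))\to H^*(\Conf_r(\Gamma)\cap V_e)\cong H^*(S^{r-1})\otimes H^*(\Conf_r(\Gamma/e))$ on the algebra generators (the Künneth isomorphism holds over $\ZZ$ because $H^*(S^{r-1})$ is free). Using the homotopy equivalence $f$ of lemma~\ref{lemma hom equi}, under which $p_e$ corresponds to the projection to $S^{r-1}$, I expect
\[
r^*(p_r(\Gamma)^*(e_e)) = [\mu]\otimes 1, \qquad r^*(p_r(\Gamma)^*(e_\beta)) = 1\otimes p_r(\Gamma/e)^*(e_{[\beta]}) \quad (\beta\neq e),
\]
where $[\beta]$ denotes the image edge in $\Gamma/e$. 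Since $r^*$ is multiplicative and $q$ is $r^*$ followed by projection onto the $[\mu]$-summand, it follows that $q\bigl(p_r(\Gamma)^*(e_e\cdot i_e^\Lambda(\bar w))\bigr)=p_r(\Gamma/e)^*(w)$ whenever $\bar w\in\Lambda[\Gamma\smallsetminus e]$ and $w=(p_e^\Lambda\circ i_e^\Lambda)(\bar w)\in\Lambda[\Gamma/e]$.

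From here the surjectivity of $q$ is formal: given $v\in H^{*-r+1}(\Conf_r(\Gamma/e))$, the inductive hypothesis yields $w$ with $v=p_r(\Gamma/e)^*(w)$; lifting $w$ through the isomorphism $p_e^\Lambda\circ i_e^\Lambda:\Lambda[\Gamma\smallsetminus e]\to\Lambda[\Gamma/e]$ to some $\bar w$ and setting $z=p_r(\Gamma)^*(e_e\cdot i_e^\Lambda(\bar w))$ gives $q(z)=v$. As $q$ is surjective in every degree, the exact sequence forces $\delta=0$, which is exactly the short exact sequence of the second assertion. Surjectivity of $p_r(\Gamma)^*$ then follows by a diagram chase: for arbitrary $z\in H^*(\Conf_r(\Gamma))$ subtract a class $p_r(\Gamma)^*(e_e\cdot i_e^\Lambda(\bar w))$ with the same $q$-image, land in $\ker q=\mathrm{im}\,i^*$, write the difference as $i^*(z')$, and pull $z'$ back using the inductive surjectivity of $p_r(\Gamma\smallsetminus e)^*$ together with the naturality $i^*\circ p_r(\Gamma\smallsetminus e)^*=p_r(\Gamma)^*\circ i_e^\Lambda$, which holds because restricting $p_\beta$ along the inclusion recovers the same map.

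I expect the main obstacle to be the restriction computation of the second paragraph, and specifically its value on the edges $\beta\neq e$ that are incident to the endpoints $a,b$ of $e$. For such an edge $p_\beta$ genuinely mixes the sphere factor and the base, so one must use that on $V_e$ the points $x_a$ and $x_b$ are closer to one another than to any other constrained vertex (this is the purpose of the cutoff $\tfrac12 m_{a,b}(x)$) in order to build an explicit homotopy from $p_\beta\circ g$ to $p_{[\beta]}$ composed with the projection to $\Conf_r(\Gamma/e)$. Everything else—the Künneth identification, the ring-map property of $r^*$, the vanishing of $\delta$, and the two diagram chases—is routine. The odd case is handled in the same way, tracking the orientation signs $\epsilon_i(w)$ and the relations $e_\alpha^2=0$.
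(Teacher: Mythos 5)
Your proposal is correct and takes essentially the same route as the paper: induction on the number of edges, the deletion--contraction long exact sequence of Theorem~\ref{del-contr space}, surjectivity of the map onto $H^{\ast-r+1}(\Conf_r(\Gamma/e))$ deduced from the inductive surjectivity of $p_r(\Gamma/e)^*$ (which forces the connecting map to vanish and yields the short exact sequence), followed by a five-lemma/diagram chase for the surjectivity of $p_r(\Gamma)^*$. The only difference is that you explicitly verify, on generators and via the homotopy equivalence of Lemma~\ref{lemma hom equi} (including the sliding homotopy for edges incident to the endpoints of $e$, controlled by the cutoff in the definition of $V_e$), the commutativity of the square relating the algebraic quotient map to $\psi^\ast$ --- a point the paper's proof asserts without detail.
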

	\begin{proof}
		We prove it by induction on the number of edges in
		$\Gamma$. The lemma is true if $\Gamma$ has one edge. Now we
		suppose the result true for graphs with $n-1$ edges. Chose one edge $\alpha\in E(\Gamma)$.

                If we have made choices of orientation of edges and order of edges for $\Gamma$, we can make compatible choices for $\Gamma\smallsetminus \alpha$ respectively $\Gamma/\alpha$, so that the maps $i:E(\Gamma\smallsetminus\alpha)\to E(\Gamma)$ respectively
        $p:E(\Gamma)\to E(\Gamma/\alpha)$ preserve the orientations and orders of the edges.  Assume that we have made such compatible choices.

We have a commutative diagram
		\begin{equation*}
		\xymatrix{
			0 \ar[r] & \Lambda[\Gamma\smallsetminus \alpha] \ar[d]_-{p_r(\Gamma\smallsetminus \alpha)^*} \ar[r]^-{i^\Lambda_\alpha} & \Lambda[\Gamma]\ar[d]_{p_r(\Gamma)^*}  \ar[r]^-{p^\Lambda_\alpha} & \Lambda[\Gamma/\alpha]\ar[d]^-{p_r(\Gamma/\alpha)^*} \ar[r] & 0 \\
			\cdots \ar[r] & H^{\ast}(\Conf_r(\Gamma\smallsetminus \alpha))
			\ar[r]_-{\phi^\ast} & H^{\ast}(\Conf_r(\Gamma))
			\ar[r]_-{\psi^\ast} & H^{\ast-r+1}(\Conf_r(\Gamma/\alpha)) \ar[r]
			& \cdots }
		\end{equation*}
		The first and last vertical maps are surjective by
		the induction hypothesis and $p^\Lambda_\alpha$ is also surjective by  lemma~\ref{le:middle}. By the commutativity
		of the diagram
		$p_r(\Gamma/e)\circ p^\Lambda_\alpha=\psi^\ast\circ p_r(\Gamma)$. Moreover
		$p_r(\Gamma/e)\circ p^\Lambda_\alpha$ is surjective since it is the
		composition of surjective maps. It follows that $\psi^\ast$
		is surjective, so that the long exact sequence at the bottom row breaks up into short exact sequences. Therefore the diagram above is a map of short exact sequences, and it follows by the five lemma that the middle vertical map is surjective.
	\end{proof}
	\begin{lemma}\label{map to 0}
		The map $p_r(\Gamma)^*$ maps elements in the ideal generated
		by the generalised Arnold relations to $0$.
	\end{lemma}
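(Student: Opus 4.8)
The map $p_r(\Gamma)^*$ is a ring homomorphism and the ideal in question is generated by the Arnold classes $A(w)$, so the plan is to show $p_r(\Gamma)^*(A(w))=0$ for every circuit $w$. First I would reduce to the case of a single simple cycle. If $w$ is not simple, I would split it at a repeated vertex exactly as in the proof of lemma~\ref{surjective_g}, writing $A(w)$ as a $\Lambda$-linear combination of terms each containing a factor $A(w^{(i)})$ with $w^{(i)}$ a strictly shorter circuit; these map to $0$ by induction on the length of the circuit. Using naturality of $p_r(-)^*$ under the inclusion of the spanning subgraph whose only edges are those of $w$ (which is a cycle $C_m$ together with isolated vertices, and whose configuration space is homotopy equivalent to $\Conf_r(C_m)$), the problem becomes: for the cycle $C_m$ with edge classes $c_1,\dots,c_m=p_r(C_m)^*(e_{w_1}),\dots,p_r(C_m)^*(e_{w_m})$, show that $p_r(C_m)^*(A(w))=0$, where $w$ is the full cycle.

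The second step is the auxiliary vanishing of the \emph{top product}: $c_1c_2\cdots c_m=0$ in $H^{(r-1)m}(\Conf_r(C_m))$. I would prove this directly from the short exact sequence of lemma~\ref{lemma surjective}. Choosing the edge $e=w_m$, the commuting square of lemma~\ref{lemma surjective} gives $\psi^*\bigl(p_r(C_m)^*(e_{w_1}\cdots e_{w_m})\bigr)=p_r(C_m/e)^*\bigl(p^\Lambda_{e}(e_{w_1}\cdots e_{w_m})\bigr)$, and since $p^\Lambda_{e}(e_{w_m})=0$ this is zero. Hence $c_1\cdots c_m\in\ker\psi^*=\operatorname{im}(\phi^*)=i^*H^{(r-1)m}(\Conf_r(C_m\smallsetminus e))$. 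But $C_m\smallsetminus e$ is a path, and for a forest $T$ one has $H^*(\Conf_r(T))=\Lambda[E(T)]$, a free exterior algebra concentrated in degrees $\le (r-1)\#E(T)=(r-1)(m-1)<(r-1)m$; this last fact I would establish by a separate, easy induction on edges using the same sequence (a forest has no circuits, so every deletion/contraction is again a forest). Therefore $H^{(r-1)m}(\Conf_r(C_m\smallsetminus e))=0$ and $c_1\cdots c_m=0$.

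For the Arnold class itself I would argue by induction on $m$. Applying $\psi^*$ to $p_r(C_m)^*(A(w))$ and using the commuting square together with $p^\Lambda_{e}(A(w))=\pm\,\prod_{j\neq i_0}e_{[w_j]}$ (the contracted top product of $C_{m-1}=C_m/e$, since $p^\Lambda_e$ kills the unique term not divisible by $e_{w_m}$), the second step applied to $C_{m-1}$ gives $\psi^*(p_r(C_m)^*A(w))=0$. Thus $p_r(C_m)^*A(w)=i^*(z)$ with $z\in H^{(r-1)(m-1)}(\Conf_r(C_m\smallsetminus e))$, the top degree of the path, which is the rank-one group generated by $c_1\cdots c_{m-1}$; so $p_r(C_m)^*A(w)=\lambda\,c_1\cdots c_{m-1}$ for a single integer $\lambda$ (all $\psi^*$-maps annihilate such products, so they cannot detect $\lambda$). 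To force $\lambda=0$ I would evaluate against an explicit homology class: the ``nested-scales polygon'' map $(S^{r-1})^{m-1}\to\Conf_r(C_m)$ sending $(u_1,\dots,u_{m-1})$ to the configuration $x_1=0,\ x_{i+1}=x_i+R_iu_i$ with $R_1\gg R_2\gg\cdots\gg R_{m-1}>0$. On its fundamental class $c_1\cdots c_{m-1}$ evaluates to a unit, while in $\sum_i(\pm)\,c_1\cdots\widehat{c_i}\cdots c_m$ every term containing both $c_1$ and $c_m$ dies (the last direction $x_1-x_m$ is dominated by $-R_1u_1$, so $c_m$ restricts to $\pm\iota_1$ and $c_1c_m=0$), leaving only the $i=1$ and $i=m$ terms, which cancel by the alternating signs (respectively the $\epsilon_i(w)$ for odd $r$). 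Hence $A(w)$ integrates to $0$, forcing $\lambda=0$.

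The hard part is precisely this last evaluation: verifying that exactly the two surviving terms cancel is the genuine topological content of the classical Arnold relation, and it is where the alternating/$\epsilon_i(w)$ signs are essential. I expect the main technical obstacle to be the sign and orientation bookkeeping for odd $r$ (the edge classes differ from the circuit-traversal directions by antipodal maps of degree $(-1)^r$, which is what produces the factors $\epsilon_i(w)$), and secondarily the two auxiliary inputs used above, namely that $H^*(\Conf_r(T))$ is a free exterior algebra for forests $T$ and that $H^*(\Conf_r(\Gamma))$ is torsion-free (both of which follow from lemma~\ref{lemma surjective} and theorem~\ref{del-contr space} by induction, the latter guaranteeing that $\lambda\cdot(\text{unit})=0$ really does give $\lambda=0$).
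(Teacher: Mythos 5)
The central flaw in your plan is the identification of the algebraic counterpart of $\psi^\ast$ in the deletion--contraction ladder of Lemma~\ref{lemma surjective}. You treat the top horizontal map $\Lambda[C_m]\to\Lambda[C_m/e]$ as the degree-preserving \emph{ring} map that kills $e_{w_m}$; but $\psi^\ast$ lands in $H^{\ast-r+1}$, so the map that actually makes the ladder commute is the degree-shifting module map of Section~\ref{proof} (the paper's $g_\alpha^\Lambda$): write $x+y e_{w_m}$ and take the coefficient $y$. This breaks both places where you use the ladder. In your second step, $\psi^\ast\bigl(p_r(C_m)^\ast(c_1\cdots c_m)\bigr)$ is not zero on the nose: it equals $p_r(C_{m-1})^\ast(\pm c_1\cdots c_{m-1})$, the top product of the contracted cycle (your ring-map version does not even match degrees). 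In your third step the key formula is wrong: since every term of $A(c^m)$ except one is divisible by $e_{w_m}$, the coefficient map sends $A(c^m)$ to $\pm A(c^{m-1})$, the \emph{Arnold class} of $C_{m-1}$, not to the top product $\pm\prod_{j\neq i_0}e_{[w_j]}$; this is exactly what the paper records as $p^\Lambda_{c_i}(A(c^n))=A(c^{n-1})$.

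The error is repairable, and the repair in fact simplifies your outline: with the correct map, $\psi^\ast\bigl(p_r(C_m)^\ast A(c^m)\bigr)=p_r(C_{m-1})^\ast(\pm A(c^{m-1}))=0$ follows directly from your induction on $m$ (base case $m=2$ as in the paper), so your top-product lemma and the forest computation invoked for it become superfluous; what you do still need is that $H^{(r-1)(m-1)}(\Conf_r(C_m\smallsetminus e))\cong\ZZ$ is generated by the image of $c_1\cdots c_{m-1}$, which is the paper's preliminary remark that $p(I_n)$ is a homotopy equivalence. Exactness then gives $p_r(C_m)^\ast A(c^m)=\lambda\, p_r(C_m)^\ast(c_1\cdots c_{m-1})$, matching the paper's equation~\ref{eq:ker1}, and from there your nested-scales torus evaluation is a legitimate and genuinely different ending: it pins down $\lambda=0$ by an integral pairing (so, contrary to your closing worry, no torsion-freeness input is needed), whereas the paper proves the stronger inductive statement that $\ker p_r(C_n)^\ast$ in degree $(n-1)(r-1)$ is \emph{exactly} the subgroup generated by $A(c^n)$ and detects $\lambda$ algebraically by applying the coefficient maps $p^\Lambda_{c_j}$, $j\neq i$. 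Your route trades that stronger induction for the sign bookkeeping you yourself flag; also, your preliminary reduction to simple cycles via the splitting of Lemma~\ref{surjective_g} is sound but unnecessary, since the paper's graph map $f:C_{l(w)}\to\Gamma$ handles arbitrary circuits in one stroke. As written, however, both invocations of the ladder use the wrong map, so the proof does not go through without the correction above.
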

	\begin{proof}
          It suffices to show that if $w$ is a circuit in $\Gamma$, and if $A(w)\in \Lambda[\Gamma]$ is the corresponding Arnold element, then $p_r(\Gamma)^*(A(w))=0$. Let $C_n$ be a cyclic graph with vertices $v_1,v_2,\dots v_n$ and edges
          $c_i=e(v_i,v_{i+1})$ for $i\leq n-1$ together with $c_n=e(v_{n},v_1) $. The edges of $C_n$ form a circuit $c^n$. There is a map of graphs $f:C_{l(w)}\to\Gamma$ which maps the edge $c_i^n\in E(C_n)$ to $w_i\in E(\Gamma)$.
          This map induces $f_*^\Lambda:\Lambda[C_n]\to \Lambda[\Gamma]$ and
          $(f^{conf})^*:  H^*(\Conf_{r}(C_r))\to  H^*(\Conf_{r}(\Gamma))$.
          By naturality, there is a commutative diagram:
\[
    \begin{tikzcd}
      \Lambda^r(C_n) \ar[r,"f_*\Lambda"]\ar[d,"p_r(C_n)^*"] &  \Lambda^r(\Gamma) \ar[d,"p_r(\Gamma)^*"]\\
      H^*(\Conf_{r}(C_r))\ar[r,"(f^{conf})^*"]&  H^*(\Conf_{r}(\Gamma))
    \end{tikzcd}
\] 
Using that $A(w)=f_*^\Lambda (A(c^n))$, it follows from this diagram
that it suffices to show that for every $n$, the Arnold class $A(c^n)$ is in the kernel of the map $p_r(C_{n})^*$.

In order to prove the lemma, we investigate the kernel of the
map of classes in degree $(n-1)(r-1)$.  We will prove inductively that the kernel of $p_r(C_n)^*:(\Lambda[C_r])^{(n-1)(r-1)}\to H^{(n-1)(r-1)}(\Conf_{r}(C_r))$ is generated by  the Arnold element $A(c^n)$.         
Note that the group $(\Lambda(C_n))^{(n-1)(r-1)}\cong \ZZ^n$ is generated by the classes
 $c^n_1c^n_2\cdots \widehat{c^n_i}\cdots c^n_n$.

We make a preliminary remark. Let $I_n$ be the linear graph
$C_n\smallsetminus c_n$. Then 
		\begin{equation*}  
		\Conf_{r}(I_n)=\{(x_{1},\dots,x_{n})\in\mathbb{R}^{kr}: x_{1}\neq x_{2},\dots,x_{n-1}\neq x_{n}\}
              \end{equation*}
              The map  $p(I_n)$ is a homotopy equivalence, and $p_r(I_n)^*$ an isomorphism. In particular $H^{(n-1)(r-1)}(\Conf_{r}(I_n))\cong \ZZ$, generated by the class $p_r(I_r)^*(c^n_1c^n_2\cdot c^n_{n-1})$.
              
                Let $n=2$. In this case $C_2$ has a double edge, so that
                $\Conf_{r}(I_2)\to \Conf_{r}(C_2)$ is a homeomorphism,
                $A(c)_2 =\pm c_1\pm c_2$ and 
                \[
                  p_r(C_2)^*:(\Lambda[C_2]/\{A(c^2)=0\})^{(r-1)}
                  \cong H^{(r-1)}(\Conf_{r}(C_2)),
                \]
                so that we have an induction start.

Let $n\geq 3$, and assume the induction hypothesis for $n-1$. Let $c_i$ be any edge of $C_n$, so that $C_n\smallsetminus c_i$ is isomorphic to $I_n$. Let $m=(n-1)(r-1)$. There is a surjective map of short exact sequences
\[
  \begin{tikzcd}
    \Lambda[C_n\smallsetminus c_i]^{(m)}\ar[r,"i^\Lambda_{c_i}"]\ar[d,"p_r(C_n\smallsetminus c_i)^*"] & \Lambda[C_n]^{(m)}\ar[r,"p^\Lambda_{c_i}"]\ar[d,"p_r(C_n)^*"] & \Lambda[C_n/c_i]^{(m)}\ar[d,"p_r(C_n/c_i)^*"] \\
    H^{(m)}(\Conf_{r}(C_n\smallsetminus c_i))\ar[r,"\phi^*"]&H^{(m)}(\Conf_{r}(C_n))\ar[r,"\psi^*"]&H^{(m)}(\Conf_{r}(C_n/c_i))
  \end{tikzcd}
\]  

Because the rows are short exact, we have an induced exact sequence of kernels and cokernels:
\[
  \ker (p_r(C_n\smallsetminus c_i)^*)
  \xrightarrow{i^\Lambda_{c_i}} \ker( p_r(C_n)^*)
  \xrightarrow{p^\Lambda_{c_i}} \ker (p_r(C_n/ c_i)^*)
  \xrightarrow{\partial} \mathrm{coker} (p_r(C_n\smallsetminus c_i)^*)
\]
The map $p_r(C_n\smallsetminus c_i)^*$ is an isomorphism by the preliminary remark. Therefore $p^\Lambda_{c_i}$ restricts to an isomorphism $\ker(p_r(C_n)^*)\to \ker(p_r(C_n/c_i^n)^*)\cong \ZZ$.
Notice also that $p^\Lambda_{c_i}(A(c^n))=A(c^{n-1})$, where we in the notation have identified $C_n/c_i$ with $C_{n-1}$. In order to complete the proof, we only need to show that $A(c^n)\in \ker (p_r(C_n)^*)$. 

By the diagram and the inductive assumption, $p_r(C_n)^*(A(c^n))$ is in the image of the map $\phi^*$, so there is an $x\in \Lambda[C_n\smallsetminus c_i]$ such that
$p_r(C_n)^*(A(c^n)-i^\Lambda_{c_i}(x))\in \ker p_r(C_n)^*$.
We conlude: For every $i$, $1\leq i\leq n$ there is a number $n_i$ such that
\begin{equation}
  \label{eq:ker1}
p_r(C_n)^*(A(c^n)-i^\Lambda_{c_i}(n_ic^n_1\cdots \widehat{c^n_i}\cdots c^n_n))=0
\end{equation}
We need to show that $n_i=0$. To prove this, we pick $j\neq i$.
\begin{align*}
  p_r(C_n/c_i^n)^*p_{c^n_i}^\Lambda(c^n)&(A(c^n)-i^\Lambda_{c_i}(n_ic_1\cdots \widehat{c_i}\cdots c_n))\\
                                        &= p_r(C_n/c_i^n)^*((A(c^{n-1})-n_ic_1 c_1\cdots \widehat{c_i}\cdots \widehat{c_j}\dots c_n\\
  &=-p_r(C_n/c_1)^*(n_ic_1^n\cdots \widehat{c_i}\cdots \widehat{c_j}\dots c_n).
\end{align*}
Since by the inductive assumption the kernel of $p_r(C_n/c_i)^*$ is the subgroup generated by $A(c^{n-1})$, and since $n_ic_1^n\cdots \widehat{c_i}\cdots \widehat{c_j}\dots c_n$ is only in this subgroup if $n_i=0$, we have proved   
\[
p_{c_j}^\Lambda(i^\Lambda_{c_i}(n_ic_i\cdots \widehat{c_i}\cdots c_n))=c_1c_2\cdots \widehat{c_i}\cdots \widehat{c_j}\cdots c_n
\]
is not in the subgroup of $\Lambda[C_n/c_j]$ generated by $A(c_{n-1})$, since $n-1>1$. It follows from the induction hypothesis that $n_i=0$, so that
$p_r(C_n)^*(A(c^n))=0$. This finishes the proof.
	\end{proof}
	\begin{corollario}
		The map
		$p_r(\Gamma)^*:\Lambda[\Gamma]\to H^*(\Conf_r(\Gamma))$
		factors uniquely over the map
		$\lambda_{r}:R^{r}(\Gamma)\rightarrow
		H^{\ast}(\Conf_{r}(\Gamma))$.
	\end{corollario}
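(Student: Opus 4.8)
The plan is to recognize this corollary as a direct application of the universal property of the quotient ring, with all the genuine content already supplied by Lemma~\ref{map to 0}. First I would recall that $p_r(\Gamma)^*:\Lambda[\Gamma]\to H^*(\Conf_r(\Gamma))$ is a ring homomorphism, and introduce the canonical projection
\[
q:\Lambda[\Gamma]\to \Lambda[\Gamma]/I^r(\Gamma)=R^r(\Gamma),
\]
which by construction is a surjective homomorphism of graded rings whose kernel is exactly the Arnold ideal $I^r(\Gamma)$.

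Next I would invoke Lemma~\ref{map to 0}, which states precisely that $p_r(\Gamma)^*$ annihilates every element of the ideal generated by the generalised Arnold relations, i.e.\ $I^r(\Gamma)\subseteq \ker\bigl(p_r(\Gamma)^*\bigr)$. Since the kernel of $q$ is contained in the kernel of $p_r(\Gamma)^*$, the universal property of the quotient of a graded ring by a homogeneous ideal yields a unique ring homomorphism $\lambda_r:R^r(\Gamma)\to H^*(\Conf_r(\Gamma))$ with $\lambda_r\circ q = p_r(\Gamma)^*$. Uniqueness is immediate from the surjectivity of $q$: any two such factorizations must agree on $\mathrm{im}(q)=R^r(\Gamma)$, hence coincide.

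I expect essentially no obstacle in this argument, as the substantive work — verifying that each Arnold class lies in the kernel of $p_r(\Gamma)^*$ — was precisely the content of Lemma~\ref{map to 0}. The only point meriting a word of care is that $I^r(\Gamma)$ is a \emph{homogeneous} ideal, so that the induced map $\lambda_r$ automatically respects the grading; this is automatic because the Arnold classes $A(w)$ are homogeneous of degree $(l(w)-1)(r-1)$. Thus the factorization is a factorization of graded ring maps, which is exactly what is asserted.
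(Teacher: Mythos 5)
Your proof is correct and follows exactly the route the paper intends: the corollary is stated without proof there precisely because it is an immediate consequence of Lemma~\ref{map to 0} together with the universal property of the quotient $\Lambda[\Gamma]\to\Lambda[\Gamma]/I^r(\Gamma)=R^r(\Gamma)$, with uniqueness from surjectivity of the projection. Your added remark that the Arnold classes are homogeneous, so the factorization respects the grading, is a correct and harmless extra check.
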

	
	\begin{teorema}
		There is a isomorphism of graded commutative rings
		\begin{equation*}
		\lambda_{r}:R^{r}(\Gamma)\rightarrow H^{\ast}(\Conf_{r}(\Gamma)).
		\end{equation*}
	\end{teorema}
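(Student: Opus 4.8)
The plan is to show that $\lambda_r(\Gamma)$ is an isomorphism by induction on the number of edges of $\Gamma$, matching the algebraic deletion--contraction short exact sequence of Corollary~\ref{corollario 1} with the topological one of Lemma~\ref{lemma surjective} and applying the short five lemma. For the base case I would take $\Gamma$ to have no edges: then $\Conf_r(\Gamma)=\RR^{rn}$ is contractible, so $H^*(\Conf_r(\Gamma))=\ZZ$ concentrated in degree $0$, while $\Lambda^r(\Gamma)=R^r(\Gamma)=\ZZ$ carries no generators and no Arnold relations, and $\lambda_r$ sends $1$ to $1$. (Equivalently one may start from the one-edge graph, where both sides compute $H^*(S^{r-1})$.)

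For the inductive step, assume $\lambda_r$ is an isomorphism for every graph with fewer than $n$ edges, and let $\Gamma$ have $n$ edges. First I would reduce to the case that $\Gamma$ has no multiple edges: if $\alpha$ is parallel to another edge, Lemma~\ref{le:double_edges} identifies $R^r(\Gamma)$ with $R^r(\Gamma\smallsetminus\alpha)$, and the constraint $x_a\neq x_b$ defining $\Conf_r(\Gamma)$ is already imposed by the parallel edge, so $\Conf_r(\Gamma)=\Conf_r(\Gamma\smallsetminus\alpha)$; these identifications are compatible with $\lambda_r$. Now pick any edge $\alpha$ of $\Gamma$. Both $\Gamma\smallsetminus\alpha$ and $\Gamma/\alpha$ have $n-1$ edges, so by the induction hypothesis $\lambda_r(\Gamma\smallsetminus\alpha)$ and $\lambda_r(\Gamma/\alpha)$ are isomorphisms (the contraction may produce multiple edges, again handled by Lemma~\ref{le:double_edges}).

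I would then assemble the commutative ladder whose top row is the sequence of Corollary~\ref{corollario 1},
\[
0\to R^r(\Gamma\smallsetminus\alpha)_k \xrightarrow{i_\alpha^R} R^r(\Gamma)_k \xrightarrow{g_\alpha^R} R^r(\Gamma/\alpha)_{k-r+1}\to 0,
\]
and whose bottom row is the sequence of Lemma~\ref{lemma surjective},
\[
0\to H^k(\Conf_r(\Gamma\smallsetminus\alpha)) \xrightarrow{\phi^*} H^k(\Conf_r(\Gamma)) \xrightarrow{\psi^*} H^{k-r+1}(\Conf_r(\Gamma/\alpha))\to 0,
\]
with vertical maps $\lambda_r(\Gamma\smallsetminus\alpha)$, $\lambda_r(\Gamma)$, $\lambda_r(\Gamma/\alpha)$. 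Commutativity of the left square is the naturality of $p_r^*$ with respect to the inclusion $\Gamma\smallsetminus\alpha\hookrightarrow\Gamma$, already recorded in the diagram in the proof of Lemma~\ref{lemma surjective}. Once both squares commute, the two outer maps being isomorphisms forces $\lambda_r(\Gamma)$ to be an isomorphism by the short five lemma, completing the induction; note that Lemma~\ref{lemma surjective} already supplies surjectivity, so only injectivity is logically new, but the five lemma delivers both at once.

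The main obstacle, and the step I would treat most carefully, is the commutativity of the right square, i.e.\ $\psi^*\circ\lambda_r(\Gamma)=\lambda_r(\Gamma/\alpha)\circ g_\alpha^R$ after the identification $R^r(\Gamma/\alpha)\otimes e_\alpha\cong R^r(\Gamma/\alpha)$ and the shift by $r-1$. This requires unwinding the geometric content of $\psi^*$: it restricts a class to $\Conf_r(\Gamma)\cap V_e\simeq S^{r-1}\times\Conf_r(\Gamma/e)$ and extracts the $[\mu]$-component. Under $p_r$, the generator $e_\alpha$ restricts to the fundamental class $[\mu]$ of the sphere while each other generator $e_\gamma$ restricts to the pullback of the corresponding class on $\Conf_r(\Gamma/e)$; hence taking the $[\mu]$-component selects exactly the monomials containing $e_\alpha$ and deletes that factor, which is precisely the algebraic map $g_\alpha$. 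This identification is what matches the right-hand algebraic map with $\psi^*$, so that $\lambda_r$ is a genuine map of short exact sequences. The same argument applies verbatim when $r$ is odd, once $p_r(\Gamma)^*$ is defined through the orientation-sensitive maps $p_e$, since the parity enters only through the already-built rings $R^r(\Gamma)$ and the sign conventions in the Arnold classes.
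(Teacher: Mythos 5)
Your proposal is correct and follows essentially the same route as the paper: induction on the number of edges, reduction of the multiple-edge case via Lemma~\ref{le:double_edges}, and the short five lemma applied to the ladder comparing the algebraic short exact sequence of Corollary~\ref{corollario 1} with the topological one of Lemma~\ref{lemma surjective}. Your explicit verification that the right-hand square commutes (identifying the $[\mu]$-component of the restriction to $\Conf_r(\Gamma)\cap V_e\simeq S^{r-1}\times\Conf_r(\Gamma/e)$ with the algebraic map $g_\alpha^R$) is a detail the paper leaves implicit, and it is carried out correctly.
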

	\begin{proof}
		We prove the theorem by induction on the number of
		edges in the graph. Assume that the lemma is true for
                all graphs with $n-1$ or fewer edges.  
                Let $\Gamma$ be graph with $n$  edges. If $\Gamma$ has multiple
                edges, the lemma follows from  the induction hypothesis and
                lemma~\ref{le:double_edges}. Consider the following map of short exact sequences:
		\begin{equation*}
		\xymatrix{
			0 \ar[r] & R^{r}(\Gamma\smallsetminus e) \ar[d]_-{\cong} \ar[r]^-{} & R^{r}(\Gamma)\ar[d]  \ar[r]^-{} & R^{r}(\Gamma/e) \ar[d]^-{\cong} \ar[r] & 0 \\
			0 \ar[r] & H^{\ast}(\Conf_r(\Gamma\smallsetminus e))
			\ar[r]_-{\phi^\ast} & H^{\ast}(\Conf_r(\Gamma))
			\ar[r]_-{\psi^\ast} & H^{\ast}(\Conf_r(\Gamma/e)) \ar[r]
			& 0\underline{} }
		\end{equation*}
		By the five lemma if follows that the middle map is also an
		isomorphism.
	\end{proof}

\bibliographystyle{plainnat}
\bibliography{progress}	
	
\end{document}